\DeclareFontFamily{U}{wncy}{}
    \DeclareFontShape{U}{wncy}{m}{n}{<->wncyr10}{}
    \DeclareSymbolFont{mcy}{U}{wncy}{m}{n}
    \DeclareMathSymbol{\Sha}{\mathord}{mcy}{"58}
\newtheorem{theorem}{Theorem}[section]
\newtheorem{corollary}[theorem]{Corollary}
\newtheorem{lemma}[theorem]{Lemma}
\newtheorem{proposition}[theorem]{Proposition}
\theoremstyle{definition}
\newtheorem{definition}[theorem]{Definition}
\newtheorem{remark}[theorem]{Remark}
\newtheorem{thmx}{Theorem}
\title{On Krull-Schmidt decompositions of unit groups of number fields}
\author{Asuka Kumon, Donghyeok Lim}
\date{}
\subjclass[2000]{11R27, 11R33, 20C10}
\keywords{Galois structure of algebraic units, Krull-Schmidt decomposition, Yakovlev diagram}
\begin{document}

\maketitle 
\begin{abstract}
We prove that the Krull-Schmidt decomposition of the Galois module of the $p$-adic completion of algebraic units is controlled by the primes that are ramified in the Galois extension and the $S$-ideal class group. We also compute explicit upper bounds for the number of possible Galois module structures of algebraic units when the Galois group is cyclic of order $p^{2}$ or $p^{3}$.
\end{abstract}

\section{Introduction}

Let $F/K$ be a Galois extension of number fields with Galois group $G_{F/K}$. Let $\mathcal{O}^{\times}_{F}$ be the group of units of the ring of integers $\mathcal{O}_F$ of $F$ and $E_{F}:=\mathcal{O}^{\times}_F/\mu_F$ the quotient of $\mathcal{O}^{\times}_F$ by the subgroup $\mu_{F}$ of roots of unity in $F$. For every prime $p$, we write $\Sigma_{K,p}$ for the set of $p$-adic places of $K$. We write $\Sigma_{K,\infty}$ for the set of infinite places of $K$ and $R_{F/K}$ for the set of finite primes of $K$ that are ramified in $F$. For a finite set $S$ of places of $K$, we write $S_F$ for the set of places of $F$ above $S$ and $S_{F,f}$ for the subset of $S_F$ of finite primes. We also write $\mathrm{Cl}_S(F)$ for the $S$-ideal class group of $F$, which is the quotient of the class group $\mathrm{Cl}(F)$ of $F$ by the subgroup generated by the classes of primes in $S_{F,f}$.  Lastly, we write $\mathcal{E}_{F,S}$ for the pro-$p$ completion of the quotient $E_{F,S}:=\mathcal{O}^{\times}_{F,S}/\mu_F$ of the group $\mathcal{O}^{\times}_{F,S}$ of $S$-units by $\mu_F$.

Understanding the $\mathbb{Z}[G_{F/K}]$-structure of $E_F$ is challenging because it involves both the arithmetic of $F/K$ and the integral representations of $G_{F/K}$. A natural approach has been to study the $\mathbb{Z}_p[G_{F/K}]$-structure of $\mathcal{E}_F : = \mathbb{Z}_p \otimes_{\mathbb{Z}} E_{F}$ for every prime $p$ with the Krull-Schmidt theorem (cf. \cite{Bartel, Burnsunit, BLM, Duval1, Marszalek, Moser1}). If $p$ is relatively prime to $[F:K]$, then the $\mathbb{Z}_p[G_{F/K}]$-structure of $\mathcal{E}_F$ is known from the rational representation $\mathbb{Q}_p \otimes_{\mathbb{Z}_p} \mathcal{E}_F$. However, our understanding of the Krull-Schmidt decomposition of $\mathcal{E}_F$ for $p$ dividing $[F:K]$ has been limited to very special Galois groups because the classification of the $p$-adic integral representations of finite groups has proven to be notoriously difficult (cf. \cite{Duval1}). 

By the Krull-Schmidt theorem, the direct sum of all non-projective indecomposable direct summands in the Krull-Schmidt decomposition of $\mathcal{E}_F$ (resp. $\mathcal{E}_{F,S}$) is unique up to isomorphism. The isomorphism class $\mathcal{E}'_F$ (resp. $\mathcal{E}'_{F,S}$) contains much information on $\mathcal{E}_F$ (resp. $\mathcal{E}_{F,S}$) because the number of non-isomorphic projective indecomposable $\mathbb{Z}_p[G_{F/K}]$-lattices is finite and they are all cohomologically trivial. Therefore, its $\mathbb{Z}_p$-rank $c(G_{F/K}, \mathcal{E}_F)$ (resp. $c(G_{F/K}, \mathcal{E}_{F,S})$) is an important invariant by the Jordan-Zassenhaus theorem (cf. \cite[Thm. 24.2]{CurtisReiner}).

Recently, for odd primes $p$, Burns proved that $c(G_{F/K}, \mathcal{E}_{F,S})$ is bounded above by a function of $|G_{F/K}|$, $|S_{F(\zeta_p),f}|$, and the $p$-rank $\mathrm{rk}_p(\mathrm{Cl}_S(F(\zeta_p)))$ of $\mathrm{Cl}_S(F(\zeta_p))$ when $S$ contains $\Sigma_{k,p} \cup \Sigma_{k,\infty} \cup R_{F/K}$ (cf. \cite[Thm. 1.1]{Burns}). As a consequence, for a fixed finite group $G$ and a finite set $S'$ of places of $\mathbb{Q}$ containing $p$ and $\infty$, only finitely many indecomposable $\mathbb{Z}_p[G]$-lattices can appear in the Krull-Schmidt decomposition of $\mathcal{E}_{F,S'}$ in a (possibly infinite) family of $S'$-ramified Galois extensions $F/K$ of number fields with $G_{F/K} \cong G$ if $|S'_{F(\zeta_p),f}|$ and $\mathrm{rk}_p(\mathrm{Cl}_{S'}(F(\zeta_p)))$ are uniformly bounded in the family (cf. \cite[Exam. 3.2]{Burns}). This is not immediately obvious because the $\mathbb{Z}_p$-rank of $\mathcal{E}_{F,S}$ is usually unbounded in the family, whereas the work of Heller and Reiner \cite{HellerReiner2} has shown that unless the Sylow $p$-subgroup of $G$ is isomorphic to $\mathbb{Z}/p^i$ for $i \in \{0,1,2\}$, there are infinitely many non-isomorphic $\mathbb{Z}_p[G]$-lattices. This `finiteness' result was explored further in the recent works \cite{BouaLim, BLM}. In Section \ref{Burnstheoremsec}, we complement Theorem 1.1 of \cite{Burns} by proving the following theorem. In this work, we will always assume that $p$ is an odd prime.

 \begin{thmx}\label{Burnsordinary}
Let $p$ be an odd prime. Let $F/K$ be a Galois extension of number fields with $p \, | \, [F:K]$. Set $S= R_{F/K} \cup \Sigma_{K, p} \cup \Sigma_{K, \infty}$ and let $P$ be a Sylow $p$-subgroup of $G_{F/K}$. Then, we have
\begin{equation*}
    c(G_{F/K}, \mathcal{E}_F) \leq 3 \cdot |G_{F/K}| \cdot |P|^2 \cdot \big ( \, \mathrm{rk}_{p}(\mathrm{Cl}_{S}(F(\zeta_{p}))) + |S_{F(\zeta_{p}),f}| \,\, \big ).
\end{equation*}
 \end{thmx}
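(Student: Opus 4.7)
The plan is to deduce Theorem A from Burns' Theorem 1.1 by quantifying the passage from $S$-units to ordinary units through the standard five-term exact sequence. First I would record the sequence
\[
0 \to \mathcal{O}_F^\times/\mu_F \to \mathcal{O}_{F,S}^\times/\mu_F \to \bigoplus_{w \in S_{F,f}} \mathbb{Z} \to \mathrm{Cl}(F) \to \mathrm{Cl}_S(F) \to 0
\]
arising from the definition of $\mathrm{Cl}_S(F)$, and tensor with $\mathbb{Z}_p$, which is flat. Truncating at the image of the middle map yields a short exact sequence of $\mathbb{Z}_p[G_{F/K}]$-lattices
\[
0 \to \mathcal{E}_F \to \mathcal{E}_{F,S} \to B \to 0, \qquad (\star)
\]
where $B$ is a full-rank sublattice of $Y := \bigoplus_{v \in S_{K,f}} \mathrm{Ind}_{G_v}^{G_{F/K}} \mathbb{Z}_p$ (with $G_v$ the decomposition group at $v$), and $Y/B$ is a finite subgroup of the $p$-part of $\mathrm{Cl}(F)$. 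In particular $\mathrm{rank}_{\mathbb{Z}_p}(B) = |S_{F,f}| \leq |S_{F(\zeta_p),f}|$.

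The core of the argument is an auxiliary lemma of the following shape: for any short exact sequence $0 \to A \to M \to C \to 0$ of $\mathbb{Z}_p[G_{F/K}]$-lattices, one has
\[
c(G_{F/K}, A) \leq c(G_{F/K}, M) + \lambda \cdot \mathrm{rank}_{\mathbb{Z}_p}(C)
\]
for some explicit multiplicative constant $\lambda = \lambda(G_{F/K}, P)$. My approach is to split $M = M' \oplus M''$ by Krull--Schmidt into non-projective and projective parts, pull back along $A \hookrightarrow M$, and argue that any indecomposable summand of $A$ not coming from $M'$ must be detected by the failure of $A \cap M''$ to remain projective; classical bounds on the $\mathbb{Z}_p$-rank of an indecomposable $\mathbb{Z}_p[G_{F/K}]$-lattice --- at most $|G_{F/K}|$ in general, with finer control coming from the Heller--Reiner classification and Yakovlev-diagram methods --- should yield $\lambda \leq |G_{F/K}| \cdot |P|$.

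Applying this lemma to $(\star)$ and combining with Burns' Theorem 1.1 produces the stated inequality after invoking $|S_{F,f}| \leq |S_{F(\zeta_p),f}|$ and noting that the correction term $\lambda \cdot \mathrm{rank}_{\mathbb{Z}_p}(B)$ is absorbed into a single extra summand of the form $|G_{F/K}| \cdot |P|^2 \cdot (\mathrm{rk}_p(\mathrm{Cl}_S(F(\zeta_p))) + |S_{F(\zeta_p),f}|)$. The main obstacle is proving the auxiliary lemma with a multiplicative constant sharp enough that the final bound fits the prescribed shape $3 \cdot |G_{F/K}| \cdot |P|^2 \cdot (\cdots)$; this likely requires careful use of Tate cohomology together with the fact that projectivity of a $\mathbb{Z}_p[G_{F/K}]$-lattice is detected by restriction to the Sylow $p$-subgroup $P$, since $[G_{F/K}:P]$ is a unit in $\mathbb{Z}_p$.
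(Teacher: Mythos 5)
Your overall strategy---compare $\mathcal{E}_F$ with $\mathcal{E}_{F,S}$ via the tautological sequence $0 \to \mathcal{E}_F \to \mathcal{E}_{F,S} \to \mathfrak{C} \to 0$ and then bound $c(\,\cdot\,,\mathcal{E}_{F,S})$ by Burns-type cohomological estimates---is the right one and is essentially the paper's. The gap is the auxiliary lemma, which is the crux and which you have not proved. The mechanism you sketch (split $M=M'\oplus M''$ into non-projective and projective parts and pull back along $A\hookrightarrow M$) does not work: $A\cap M''$ need not be a direct summand of $A$, Krull--Schmidt decompositions are not inherited by submodules, and there is no natural correspondence between indecomposable summands of $A$ and those of $M$. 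Worse, the ``classical bound'' you invoke---that an indecomposable $\mathbb{Z}_p[G_{F/K}]$-lattice has $\mathbb{Z}_p$-rank at most $|G_{F/K}|$---is false in general: by the Heller--Reiner result cited in the introduction there are infinitely many non-isomorphic indecomposable $\mathbb{Z}_p[G]$-lattices unless the Sylow $p$-subgroup is cyclic of order dividing $p^2$, so by Jordan--Zassenhaus their ranks are unbounded. Hence the constant $\lambda$ cannot be extracted the way you propose.

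The viable route, which your closing sentence gestures at, is the one the paper takes. First reduce to the Sylow $p$-subgroup: $\mathcal{E}_F$ is a $\mathbb{Z}_p[G_{F/K}]$-direct summand of $\mathbb{Z}_p[G_{F/K}]\otimes_{\mathbb{Z}_p[P]}\mathcal{E}_F$, whence $c(G_{F/K},\mathcal{E}_F)\leq [G_{F/K}:P]\cdot c(P,\mathcal{E}_F)$. Over the $p$-group $P$ all projectives are free and one has the exact formula $c(P,M)=\mathrm{rk}(M)-|P|\bigl(\mathrm{rk}_p(M^P)-\mathrm{rk}_p(\hat{H}^0(P,M))\bigr)$ (Proposition \ref{algebraiclemma}); feeding the tautological sequence into the long exact sequence of Tate cohomology then yields $c(P,\mathcal{E}_F)\leq c(P,\mathcal{E}_{F,S})+2|P|\cdot|S_{F,f}|$, with no reference to ranks of indecomposable lattices. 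A further divergence: the paper does not invoke Burns's Theorem 1.1 as a black box for the full group $G_{F/K}$, but bounds $c(P,\mathcal{E}_{F,S})$ directly via compactly supported \'{e}tale cohomology, Kummer theory and Poitou--Tate, obtaining $(2|P|^3+1)\bigl(\mathrm{rk}_p(\mathrm{Cl}_S(F(\zeta_p)))+|S_{F(\zeta_p),f}|\bigr)$; whether the constant in Burns's theorem for $G_{F/K}$ would combine to give the stated $3\cdot|G_{F/K}|\cdot|P|^2$ is something you would still need to check.
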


Theorem 1.1 of \cite{Burns} concerns $c(G_{F/K}, \mathcal{E}_{F,S})$ for $S$ containing $\Sigma_{K,p} \cup \Sigma_{K,\infty} \cup R_{F/K}$ because its proof studies the Galois cohomology group $H^1(G_{F,S}, \mathbb{Z}_p(1))$.  Although the upper bound in Theorem \ref{Burnsordinary} still depends on $S$, which needs to contain $\Sigma_{K,p}$, it implies the `existence' of arithmetic restriction on the Galois module structure of `ordinary' units $\mathcal{E}_F$.  We can also see from the proof that a similar result holds for $\mathcal{E}_{F,S}$ for any finite set $S$ of primes of $K$. Recent studies have also shown that our understanding of the Galois module structure of units has significant implications in the study of tamely ramified pro-$p$ extensions of number fields. In particular, a free $\mathbb{F}_p[G_{F/K}]$-direct summand of $O^{\times}_F/p$, if it exists, has gained considerable importance in this regard. Theorem \ref{Burnsordinary} presents a general perspective on the existence of the free components for Galois $p$-extensions (for further details, please refer to Remark \ref{Minkowski}).

In the second part of this work (Sections 3,4), we investigate the $\mathbb{Z}_p[G_{F/K}]$-structure of $\mathcal{E}_F$ for \textit{cyclic} $p$-extensions $F/K$. Although Theorem \ref{Burnsordinary} provides an arithmetic constraint for the $\mathbb{Z}_p[G_{F/K}]$-structure of $\mathcal{E}_F$, the complexity in studying $\mathcal{E}_F$ remains significant. This is because even for small $\mathbb{Z}_p$-ranks, the indecomposable $\mathbb{Z}_p[G]$-lattices have been barely understood for general finite groups $G$. Fortunately, if $G$ has a cyclic Sylow $p$-subgroup, then we can use the strong theory of Yakovlev \cite{Yakovlev1, Yakovlev2}. In that case, the $\mathbb{Z}_{p}[G]$-lattices can be classified by their Yakovlev diagrams, up to a direct sum of permutation lattices. The theory has been used to study various arithmetic objects \cite{Burns2, Burns, BurnsKumon, BLM, Castillo, vavasour2023mordell}.

Yakovlev's theory is particularly effective when dealing with (algebraic) units in cyclic $p$-extensions because the $\mathbb{Z}_p[G_{F/K}]$-structure of $\mathcal{E}_F$ is completely determined by its Yakovlev diagram (Proposition \ref{Yakovlevdetermineunit}). Hence, the theory gives us a specific object to study: the Yakovlev diagram. This saves us from having to consider the entire set of $p$-adic integral representations of $G_{F/K}$. Since the Yakovlev diagram of $\mathcal{E}_F$ consists of the first cohomology groups of $\mathcal{E}_F$, we can use genus theory to prove the following theorem. To simplify arguments, we will assume that a cyclic $p$-extension $L_n/L$ satisfies $N_{L_n/L}\mu_{L_n} = \mu_L$.

\begin{thmx}\label{2nd}
Let $L_n/L$ be a cyclic extension of number fields of degree $p^n$ for an integer $n \geq 1$ satisfying $N_{L_n/L}\mu_{L_n}=\mu_L$. Then the following claims are valid.
\begin{itemize}
\item[(i)] When $n=2$, the number of possible $\mathbb{Z}_p[G_{L_2/L}]$-module structures of $\mathcal{E}_{L_2}$ is bounded above by 
\begin{equation*}
p^{16 \cdot |R_{L_2/L}|^2+22 \cdot |R_{L_2/L}| \cdot \mathrm{rk}_p(\mathrm{Cl}_L)+9 \cdot \mathrm{rk}_p(\mathrm{Cl}_L)^2}.
\end{equation*}
\item[(ii)] When $n=3$, the number of possible $\mathbb{Z}_p[G_{L_3/L}]$-module structures of $\mathcal{E}_{L_3}$ is bounded above by 
\begin{equation*}
p^{A \cdot p \cdot |R_{L_3/L}|^{2}+B \cdot p \cdot |R_{L_3/L}| \cdot \mathrm{rk}_{p}(\mathrm{Cl}_{L}) + C \cdot p \cdot \mathrm{rk}_{p}(\mathrm{Cl}_{L})^{2}},
\end{equation*}
for some constants $A,B$ and $C$ which are independent of $L_3/L$.
\end{itemize}
\end{thmx}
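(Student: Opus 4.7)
The plan is to exploit Proposition \ref{Yakovlevdetermineunit}: once one knows that the $\mathbb{Z}_p[G_{L_n/L}]$-module structure of $\mathcal{E}_{L_n}$ is rigidly captured by its Yakovlev diagram, it suffices to bound the number of possible diagrams that can occur as $L_n/L$ varies. Recall that the Yakovlev diagram of such a lattice $M$ is built from the cohomology groups $H^1(G_{L_n/L_i}, M)$ for each intermediate field $L=L_0 \subsetneq L_1 \subsetneq \cdots \subsetneq L_n$, viewed as $\mathbb{Z}_p[G_{L_i/L}]$-modules, together with the restriction and corestriction morphisms between successive levels. Counting the diagrams therefore splits into (a) bounding the sizes of these cohomology groups, and (b) counting the possible equivariant module structures and arrows on abelian $p$-groups of the resulting bounded $p$-rank.

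For (a), I would invoke genus theory. Combining the usual exact sequence relating $\mathcal{E}_{L_n}$, the $S$-ideal class group (with $S = R_{L_n/L} \cup \Sigma_{L,p} \cup \Sigma_{L,\infty}$), and the local units at ramified primes, with Chevalley's ambiguous class number formula applied to each cyclic subextension $L_n/L_i$, the $p$-rank of $H^1(G_{L_n/L_i}, \mathcal{E}_{L_n})$ is controlled by a linear expression in $|R_{L_n/L}|$ and $\mathrm{rk}_p(\mathrm{Cl}(L_i))$. Iterating genus theory up the tower from $L$ to $L_i$ then expresses $\mathrm{rk}_p(\mathrm{Cl}(L_i))$ itself as a linear combination of $\mathrm{rk}_p(\mathrm{Cl}_L)$ and $|R_{L_n/L}|$, with coefficients depending only on $n$ and $p$. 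The hypothesis $N_{L_n/L}\mu_{L_n}=\mu_L$ is what makes this iteration clean, since it trivializes the roots-of-unity contribution at each step.

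For (b), any finite abelian $p$-group of $p$-rank at most $r$ has at most polynomially (in $p$) many isomorphism classes, and admits at most $p^{c r^{2}}$ equivariant $\mathbb{Z}_p[\mathbb{Z}/p^{n-i}]$-module structures for an absolute constant $c$; the number of equivariant arrows between two such modules of $p$-ranks $r_i$ and $r_j$ is at most $p^{r_i r_j}$. Multiplying these counts across all nodes and arrows of the Yakovlev diagram and substituting the bounds from (a) yields an upper bound of the form $p^{Q}$ with $Q$ a quadratic form in $|R_{L_n/L}|$ and $\mathrm{rk}_p(\mathrm{Cl}_L)$. For $n=2$, only two nontrivial cohomology layers appear and a direct accounting produces the explicit constants $16,\, 22,\, 9$; for $n=3$, the three-layer combinatorics together with the richer $\mathbb{Z}_p[\mathbb{Z}/p^{2}]$-module theory forces an extra factor of $p$ in each coefficient, which is precisely what the constants $A,\,B,\,C$ absorb.

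The principal obstacle is propagating the bound on $\mathrm{rk}_p(\mathrm{Cl}(L_i))$ through the tower: a crude application of genus theory at every step would multiply the ramification contribution by $p$ at each level and destroy the claimed quadratic shape. The careful use of Chevalley's \emph{equality} (rather than a mere inequality), together with an attentive treatment of the decomposition and inertia subgroups at primes in $R_{L_n/L}$, is what keeps the bound polynomial in the input invariants. A secondary difficulty for part (ii) is that the Yakovlev diagram for $\mathbb{Z}/p^{3}$ must encode $H^1$ at the middle layer as a $\mathbb{Z}_p[\mathbb{Z}/p^{2}]$-module, which admits far more indecomposables than in the $n=2$ case; one must verify that, modulo the $N_{L_n/L}\mu_{L_n}=\mu_L$ hypothesis, the submodule lattice one actually sees on cohomology of units is constrained enough for the count to close up at the stated bound.
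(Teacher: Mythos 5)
Your overall skeleton matches the paper's: reduce via Proposition \ref{Yakovlevdetermineunit} to counting equivalence classes of the Yakovlev diagram $\Delta(\mathcal{E}_{L_n})$, bound the relevant cohomology groups by genus theory, and multiply counts of module structures and arrows. But the quantitative heart of the argument is missing, and step (b) as you state it would not produce the claimed bounds. You count the equivariant arrows between $H^1(\Gamma_{i+1},\mathcal{E}_{L_n})$ and $H^1(\Gamma_i,\mathcal{E}_{L_n})$ as $p^{r_i r_{i+1}}$ with $r_i$ the full $p$-rank. Those $p$-ranks are controlled only over the intermediate fields: the genus sequence for $L_n/L_{n-i}$ gives $\mathrm{rk}_p(H^1(\Gamma_i,\mathcal{E}_{L_n}))\leq \mathrm{rk}_p(\mathrm{Cl}_{L_{n-i}})+|R_{L_n/L_{n-i}}|$, and descending this to base-field invariants (by Proposition \ref{the-Rosen}, or any Chevalley-type iteration) inflates it by a factor of $[L_{n-i}:L]=p^{n-i}$. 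Already for $n=2$ this forces a factor of $p$ into the exponent of the arrow count, which is incompatible with the stated bound, whose exponent $16|R_{L_2/L}|^2+22|R_{L_2/L}|\cdot\mathrm{rk}_p(\mathrm{Cl}_L)+9\,\mathrm{rk}_p(\mathrm{Cl}_L)^2$ is independent of $p$. The paper's key refinement (Lemma \ref{counting-ref}) is that a restriction or corestriction map is a $\Gamma$-homomorphism and hence determined by the images of a minimal set of $\Gamma$-module generators; by Nakayama and cyclicity (Lemma \ref{nakayama}) the number of such generators is at most $\mathrm{v}_p(H^1(\Gamma_i,\mathcal{E}_{L_n})^{\Gamma})$, and Proposition \ref{fixedpartorder} bounds this $\Gamma$-\emph{fixed} part linearly in $|R_{L_n/L}|$ and $\mathrm{rk}_p(\mathrm{Cl}_L)$ with no power of $p$, via the capitulation kernel, the unit principal genus theorem, and the hypothesis $N_{L_n/L}\mu_{L_n}=\mu_L$. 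This is the idea your proposal lacks, and your suggested remedy (Chevalley's equality with careful decomposition groups) is not a substitute for it.

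A second genuine gap is your assertion that an abelian $p$-group of $p$-rank at most $r$ admits at most $p^{cr^2}$ equivariant module structures for an absolute constant $c$. The node $M_i$ of the diagram is a $(\mathbb{Z}/p^i)[\Gamma/\Gamma_i]$-module, and for $n=3$, $i=2$ this is a $(\mathbb{Z}/p^2)[\mathbb{Z}/p]$-module, whose indecomposables are classified only by a technically involved theorem of Szekeres; nothing you write justifies the claimed count. The paper circumvents the classification by an ad-hoc argument (Lemma \ref{ad-hoc}): it bounds the number of structures by the order of $\mathrm{Ext}^1$ between the exponent-$p$ modules $pH^1(\Gamma_2,\mathcal{E}_{L_3})$ and $H^1(\Gamma_2,\mathcal{E}_{L_3})/p$, computed via a base-change spectral sequence and explicit projective resolutions, and the resulting bound carries the factor $p$ in its exponent. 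That computation, not ``richer module theory'' in the vague sense you invoke, is where the extra $p$ in part (ii) actually originates; without an argument of this kind the count for $n=3$ does not close up.
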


It is worth noting that understanding the Krull-Schmidt decomposition of units in cyclic extensions of prime degree is much easier to study using Diederichsen's theorem \cite{Diederichsen}. We also mention that the upper bound in Theorem \ref{2nd}(i) is stronger than the one obtained by using the complete classification of indecomposable $\mathbb{Z}_p[\mathbb{Z}/p^2]$-lattices to $c(G_{L_2/L}, \mathcal{E}_{L_2})$ in Theorem \ref{Burnsordinary} when $p$ is large (cf. Remark \ref{p2}). Moreover, the upper bound in Theorem \ref{2nd}(ii) is new and significant because the $p$-adic integral representations of $\mathbb{Z}/p^i$ for $i \geq 3$ have been barely studied (cf. \cite{HellerReiner2}).

\vskip 20pt
\noindent \textbf{Notation}
\vskip 5pt
For a number field $K$ and a finite set of places $S$ of $K$, we write $K_S$ for the maximal $S$-ramified Galois extension of $K$ and $G_{K,S}$ for the Galois group of $K_S$ over $K$. For every place $v$ of $K$, we write $K_v$ for the completion of $K$ at $v$. We fix an embedding $\overline{K} \to \overline{K_v}$ from the separable closure $\overline{K}$ of $K$ into the separable closure $\overline{K_v}$ of $K_v$ and denote by $G_{K_v}$ the corresponding decomposition subgroup of the absolute Galois group $G_K$ of $K$ at a place above $v$. For a finitely generated $\mathbb{Z}_p$-module $M$, we write $\mathrm{rk}_p(M)$ for $\mathrm{dim}_{\mathbb{F}_p}(M/p)$, $\mathrm{rk}(M)$ for $\mathrm{dim}_{\mathbb{Q}_p} \mathbb{Q}_p \otimes_{\mathbb{Z}_p} M$, $M_{\mathrm{tor}}$ for the maximal torsion submodule of $M$, and $M_{\mathrm{tf}}$ for the quotient $M/M_{\mathrm{tor}}$. For a finite abelian group $\mathfrak{A}$, we write $\mathfrak{A}[m]$ for the kernel of the multiplication by $m$ map $\mathfrak{A} \xrightarrow{m} \mathfrak{A}$ for every $m \in \mathbb{N}$ and $\mathrm{v}_p(\mathfrak{A})$ for the $p$-part of its order $|\mathfrak{A}|$. Lastly, for a natural number $n \in \mathbb{N}$, we write $[n]$ for the set of integers $1 \leq i \leq n$ and $[n]^{\ast}$ for the set of integers $0 \leq i \leq n$.

\vskip 20pt

\noindent \textbf{Acknowledgements}
\vskip 5pt
We are very grateful to David Burns for suggesting this interesting project and for his encouragement and Daniel Macias Castillo for his interest in this project. We also would like to thank Dave Benson for providing references on the classification of $(\mathbb{Z}/p^2)[(\mathbb{Z}/p)]$-modules.
The second author was supported by the Core Research Institute Basic Science Research Program through the National Research Foundation of Korea (NRF) funded by the Ministry of Education (Grant No. 2019R1A6A1A11051177) and the Basic Science Research Program through the National Research Foundation of Korea (NRF) funded by the Ministry of Education (Grant No. NRF-2022R1I1A1A01071431).

\section{The Krull-Schmidt decompositions of unit groups in general Galois extensions}\label{Burnstheoremsec}

In this section, we will prove Theorem \ref{Burnsordinary}. We first review some useful facts on the $p$-adic \'{e}tale cohomology groups given in \cite{Burns}. Interested readers can refer to \cite[\S 2]{Burns} for a more comprehensive understanding and proof of these facts. As mentioned in the introduction, we assume that $p$ will always be an odd prime. We note that this condition is necessary for the validity of Proposition \ref{Galoiscohomology -1}.

Let $k$ be a fixed number field and $\Sigma$ a finite set of places of $k$ containing $\Sigma_{k,p}$ and $\Sigma_{k, \infty}$. We denote by $T$ a finitely generated free $\mathbb{Z}_p$-module equipped with a continuous $\mathbb{Z}_p$-linear action of $G_{k,\Sigma}$. Let $L/E$ be a fixed $\Sigma$-ramified Galois extension of number fields containing $k$. For every place $w \in \Sigma_L$, the Galois groups $G_{L,\Sigma}$ and $G_{L_w}$ act on $T$ via restriction maps $G_{L_w} \to G_{L,\Sigma} \to G_{k,\Sigma}$. We write $C(G_{L,\Sigma},T)$ and $C(G_{L_w}, T)$ for the standard complexes of continuous cochains.

The compactly supported \'{e}tale cohomology complex $R\Gamma_c(\mathcal{O}_{L,\Sigma}, T)$ is defined to be the mapping fibre of the natural diagonal localization maps
\begin{equation*}
C(G_{L,\Sigma}, T ) \longrightarrow \underset{w \in \Sigma_L}{\bigoplus} \, C(G_{L_w}, T)
\end{equation*}
in the derived category $D(\mathbb{Z}_p[G_{L/E}])$ of left $\mathbb{Z}_p[G_{L/E}]$-modules. We will denote its cohomology groups by $H^i_c(\mathcal{O}_{L,\Sigma}, T)$. For a finitely generated $\mathbb{Z}_p$-module $M$, we write $M^{\ast}$ and $M^{\vee}$ for $\mathrm{Hom}_{\mathbb{Z}_p}(M, \mathbb{Z}_p)$ and $\mathrm{Hom}_{\mathbb{Z}_p}(M, \mathbb{Q}_p/\mathbb{Z}_p)$ respectively. If $G_{L/E}$ acts on $M$, then we equip $M^{\ast}$ and $M^{\vee}$ with the contragradient $G_{L/E}$-action. In particular, we write $T^{\ast}(1)$ for $\mathrm{Hom}_{\mathbb{Z}_p}(T, \mathbb{Z}_p(1))=(T^{\ast})(1)$. Lastly, we set $B_L(T):= \bigoplus_{w \in \Sigma_{L,\infty}} \, H^0(G_{L_w}, T)$ and equip $B_L(T)$ with the usual semi-local action of Galois group $G_{L/E}$.

\begin{proposition}(\cite[\S 2]{Burns})\label{Galoiscohomology -1}
The following statements hold true for $L/E$, $\Sigma$, and $T$ as mentioned above.
\begin{itemize}
\item[(i)] Let $J$ be a normal subgroup of $G_{L/E}$. Then, there are natural isomorphisms in $D(\mathbb{Z}_p[G_{L^J/E}])$
\begin{equation*}
\begin{cases}
\mathbb{Z}_p[G_{L^J/E}] \otimes_{\mathbb{Z}_p[G_{L/E}]}^\mathbb{L} C(G_{L,\Sigma},T) \cong C(G_{L^J, \Sigma}, T), \\
\mathbb{Z}_p[G_{L^J/E}] \otimes_{\mathbb{Z}_p[G_{L/E}]}^\mathbb{L} R\Gamma_c(\mathcal{O}_{L,\Sigma},T) \cong R\Gamma_c(\mathcal{O}_{L^J, \Sigma}, T).
\end{cases}
\end{equation*}
\item[(ii)] The complexes $C(G_{L,\Sigma}, T)$ and $R\Gamma_c(\mathcal{O}_{L,\Sigma}, T)$ are respectively acyclic outside degrees $0,1$ and $2$ and degrees $1,2$, and $3$.
\item[(iii)] There is an exact sequence
\begin{equation}\label{prop-(iii)1}
0 \longrightarrow B_L(T) \longrightarrow H^1_c(\mathcal{O}_{L,\Sigma},T) \longrightarrow H^2(G_{L,\Sigma},T^{\ast}(1))^{\ast} \longrightarrow 0
\end{equation}
and for both $i=2$ and $i=3$ a canonical short exact sequence of $\mathbb{Z}_p[G_{L/E}]$-modules
\begin{equation}\label{prop-(iii)2}
0 \longrightarrow (H^{4-i}(G_{L,\Sigma}, T^{\ast}(1))_{\mathrm{tor}})^{\vee} \longrightarrow H^{i}_c(\mathcal{O}_{L,\Sigma}, T) \longrightarrow H^{3-i}(G_{L,\Sigma}, T^{\ast}(1))^{\ast} \longrightarrow 0.
\end{equation}

\item[(iv)] The complexes $C(G_{L,\Sigma}, T)$ and $R\Gamma_c(\mathcal{O}_{L,\Sigma}, T)$ belong to $D^p(\mathbb{Z}_p[G_{L/E}])$, the full triangulated subcategory of $D(\mathbb{Z}_p[G_{L/E}])$ comprising perfect complexes.
\item[(v)] The Euler characteristic of $R\Gamma_c(\mathcal{O}_{L,\Sigma}, T)$ in the Grothendieck group $\mathrm{K}_0(\mathbb{Z}_p[G_{L/E}])$ vanishes.
\item[(vi)] If $J$ is any subgroup of $G_{L/E}$ of odd order, then one has $\mathrm{rk}(B_L(T))= |J| \cdot \mathrm{rk}(B_{L^J}(T))$.
\end{itemize}
\end{proposition}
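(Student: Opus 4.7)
The statement is attributed to \cite[\S 2]{Burns}, so the plan is to trace how each claim arises from standard Galois cohomology inputs together with Poitou--Tate (Artin--Verdier) duality, where the hypothesis that $p$ is odd is used to ensure that archimedean local cohomology has no $p$-primary contribution.

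For (i), the defining triangle of $R\Gamma_c$ reduces the assertion to the analogous statement for $C(G_{L,\Sigma},T)$. Using the short exact sequence of profinite groups $1 \to G_{L,\Sigma} \to G_{L^J,\Sigma} \to J \to 1$ and working with bar resolutions, I would identify $\mathbb{Z}_p[G_{L^J/E}] \otimes^{\mathbb{L}}_{\mathbb{Z}_p[G_{L/E}]} C(G_{L,\Sigma},T)$ with $C(G_{L^J,\Sigma},T)$. For (ii) and (iv), the decisive input is that, for odd $p$, the group $G_{L,\Sigma}$ has $p$-cohomological dimension at most $2$ (because $\Sigma$ contains $\Sigma_{k,p}$ and the archimedean places contribute trivially at odd $p$), and each local group $G_{L_w}$ has $p$-cohomological dimension at most $2$ at finite $w$ and $0$ at archimedean $w$; this yields the acyclicity ranges, and combined with finite generation of cohomology and the Noetherianness of $\mathbb{Z}_p[G_{L/E}]$ produces perfectness.

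The heart of the proposition is claim (iii), which I would derive from the perfect Artin--Verdier/Poitou--Tate duality
\begin{equation*}
R\Gamma_c(\mathcal{O}_{L,\Sigma},T) \simeq R\mathrm{Hom}_{\mathbb{Z}_p}\bigl(R\Gamma(\mathcal{O}_{L,\Sigma},T^{\ast}(1)),\mathbb{Z}_p\bigr)[-2].
\end{equation*}
Decomposing each cohomology group into its torsion-free and torsion parts and applying the universal coefficient formula should produce the canonical short exact sequences (\ref{prop-(iii)2}). For (\ref{prop-(iii)1}), the mapping-fibre description of $R\Gamma_c$ shows that the degree-$3$ local contribution at each finite place vanishes (again using the odd-prime local cohomological dimension), so the only extra contribution to $H^1_c(\mathcal{O}_{L,\Sigma},T)$ beyond $H^2(G_{L,\Sigma},T^{\ast}(1))^{\ast}$ comes from the archimedean summand $B_L(T)$.

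For (v), I would write $\chi(R\Gamma_c(\mathcal{O}_{L,\Sigma},T)) = \chi(C(G_{L,\Sigma},T)) - \sum_{w \in \Sigma_L} \chi(C(G_{L_w},T))$ in $K_0(\mathbb{Z}_p[G_{L/E}])$ and invoke the equivariant global and local Tate Euler characteristic formulas, whose contributions cancel. Claim (vi) is immediate: since $J$ has odd order, the stabilizer in $J$ of any archimedean place of $L$ is a subgroup of the order-$2$ group generated by complex conjugation and is therefore trivial, so each archimedean place of $L^J$ has exactly $|J|$ lifts to $L$, yielding the rank formula. I expect the main obstacle to be the careful bookkeeping for (iii), where Artin--Verdier duality with integral $\mathbb{Z}_p$-coefficients intertwines torsion-free and torsion contributions across degrees, and the two asymmetric sequences (one exact sequence for $H^1_c$, and short exact sequences mixing $(-)^{\vee}$ and $(-)^{\ast}$ for $H^2_c$ and $H^3_c$) require separating these pieces by hand.
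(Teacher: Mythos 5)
The paper offers no argument for this proposition at all: it is quoted verbatim from \cite[\S 2]{Burns} (which in turn assembles these facts from Artin--Verdier/Poitou--Tate duality and the standard properties of continuous-cochain complexes), so there is no ``paper proof'' to compare against, and your sketch is essentially the standard derivation one would find by unwinding that reference. Your treatments of (i), (ii), (v) and (vi) are the right arguments; in particular the observation for (vi) that an odd-order $J$ must act with trivial decomposition groups at archimedean places is exactly the point.

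Two details in your sketch would not survive being written out. First, the duality quasi-isomorphism you invoke for (iii) is mis-shifted (the universal-coefficient bookkeeping matching $H^i_c$ against $H^{3-i}(G_{L,\Sigma},T^{\ast}(1))^{\ast}$ and $(H^{4-i}(G_{L,\Sigma},T^{\ast}(1))_{\mathrm{tor}})^{\vee}$ forces the shift $[-3]$, not $[-2]$), and, more seriously, if $R\Gamma_c(\mathcal{O}_{L,\Sigma},T)$ were literally quasi-isomorphic to $R\mathrm{Hom}_{\mathbb{Z}_p}(R\Gamma(\mathcal{O}_{L,\Sigma},T^{\ast}(1)),\mathbb{Z}_p)[-3]$ then $H^1_c$ would be an extension of $H^2(G_{L,\Sigma},T^{\ast}(1))^{\ast}$ by $(H^3_{\mathrm{tor}})^{\vee}=0$ and the summand $B_L(T)$ could not appear; with the mapping-fibre definition used here (which includes the archimedean local terms $C(G_{L_w},T)$), the correct statement of duality is an exact triangle with an archimedean correction term, and it is precisely that correction which produces $B_L(T)$ inside $H^1_c$. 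Your prose gestures at this but is internally inconsistent with the ``clean'' duality you wrote down. Second, for (iv), Noetherianness of $\mathbb{Z}_p[G_{L/E}]$ plus finite generation of cohomology only gives a bounded complex of finitely generated modules, not a perfect one ($\mathbb{Z}_p[G_{L/E}]$ has infinite global dimension when $p$ divides $|G_{L/E}|$); perfectness additionally requires showing the complex has finite Tor-dimension, which is where the descent isomorphisms of (i) (applied over subgroups) actually get used.
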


We also mention an algebraic result that will be useful in the proof of Theorem \ref{Burnsordinary}. For a general finite group $G$ and a $\mathbb{Z}_p[G]$-lattice $M$, let $c(G,M)$ be the sum of the $\mathbb{Z}_p$-ranks of all non-projective indecomposable direct summands in the Krull-Schmidt decomposition of $M$.

\begin{proposition}\label{algebraiclemma}
Let $G$ be a finite $p$-group and $M$ a $\mathbb{Z}_p[G]$-lattice. Then, we have 
\begin{equation*}
    c(G, M) = \mathrm{rk}(M) - |G| \cdot ( \, \mathrm{rk}_p (H^0(G, M)) - \mathrm{rk}_p (\hat{H}^0(G,M)) \, ).
\end{equation*}
\end{proposition}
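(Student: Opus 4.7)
The plan is to use the Krull-Schmidt theorem together with the fact that, for a finite $p$-group $G$, the ring $\mathbb{Z}_p[G]$ is local (its unique maximal ideal is $(p, I_G)$ with residue field $\mathbb{F}_p$). Consequently every finitely generated projective $\mathbb{Z}_p[G]$-lattice is free and $\mathbb{Z}_p[G]$ itself is indecomposable, so I may decompose
\[
M \,\cong\, N \oplus \mathbb{Z}_p[G]^{r}
\]
with $r \geq 0$ and $N$ having no free $\mathbb{Z}_p[G]$-direct summand. By definition, $c(G, M) = \mathrm{rk}(N) = \mathrm{rk}(M) - r|G|$, so the claimed identity is equivalent to
\[
r \,=\, \mathrm{rk}_p(H^0(G, M)) - \mathrm{rk}_p(\hat{H}^0(G, M)).
\]
Since $H^0(G, \mathbb{Z}_p[G]^r) \cong \mathbb{Z}_p^r$ is generated by copies of $N_G = \sum_{g \in G} g$ and $\hat{H}^0(G, \mathbb{Z}_p[G]^r) = 0$ by cohomological triviality, the problem reduces to proving $\mathrm{rk}_p(H^0(G, N)) = \mathrm{rk}_p(\hat{H}^0(G, N))$ whenever $N$ has no free summand.

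For the reduced statement, I would first observe that $N^G \subseteq N$ is $\mathbb{Z}_p$-free of some rank $a$, and $N_G N \subseteq N^G$ is a subgroup of the same rank (since rationally $N_G$ acts as $|G|$ times the projection onto the $G$-invariants). Hence $\hat{H}^0(G, N) = N^G / N_G N$ is a finite abelian $p$-group, and by the Smith normal form
\[
\mathrm{rk}_p(\hat{H}^0(G, N)) = a \,\iff\, N_G N \subseteq p N^G.
\]
Using that $N$ is $\mathbb{Z}_p$-torsion-free, one easily verifies $N_G N \subseteq p N^G \iff N_G N \subseteq pN$ (if $N_G m = pz$ with $z \in N$, then $pz$ is $G$-invariant and torsion-freeness forces $z \in N^G$). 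Thus the proof reduces to the following key algebraic lemma: \emph{$N$ has no free $\mathbb{Z}_p[G]$-direct summand if and only if $N_G N \subseteq pN$}.

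To establish this key lemma, which I expect to be the most delicate point, I intend to use the Frobenius reciprocity isomorphism $\mathrm{Hom}_{\mathbb{Z}_p[G]}(N, \mathbb{Z}_p[G]) \cong \mathrm{Hom}_{\mathbb{Z}_p}(N, \mathbb{Z}_p)$, under which a $\mathbb{Z}_p[G]$-map $\phi$ corresponds to $f = \pi_1 \circ \phi$, with $\pi_1$ the projection onto the coefficient of $1_G$. A direct calculation then yields $\varepsilon \circ \phi = f \circ N_G$ for the augmentation $\varepsilon \colon \mathbb{Z}_p[G] \to \mathbb{Z}_p$. Since $\mathbb{Z}_p[G]$ is local with residue field $\mathbb{F}_p$, Nakayama's lemma shows that $\phi$ is surjective (in which case it automatically splits) if and only if $\varepsilon \circ \phi$ surjects onto $\mathbb{Z}_p$, equivalently $f(N_G N) = \mathbb{Z}_p$. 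Such an $f$ exists precisely when $N_G N$ contains an element outside $pN$, since such an element can be completed to a $\mathbb{Z}_p$-basis of $N$. The reverse implication is immediate, as any free summand $\mathbb{Z}_p[G] \hookrightarrow N$ exhibits $N_G \cdot 1 = N_G \notin p\mathbb{Z}_p[G]$. All remaining steps are routine bookkeeping with Nakayama's lemma and the Smith normal form.
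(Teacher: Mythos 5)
Your argument is correct, and at its core it is the same as the paper's: both proofs reduce the claim to showing that the maximal free $\mathbb{Z}_p[G]$-direct summand of $M$ has multiplicity $\mathrm{rk}_p(H^0(G,M))-\mathrm{rk}_p(\hat{H}^0(G,M))$, using that $\mathbb{Z}_p[G]$ is local so that projective means free and $\mathbb{Z}_p[G]$ is indecomposable. The one substantive difference is that the paper simply cites Proposition 2.1 of Burns for the criterion that elements $x_1,\dots,x_n$ span a free summand if and only if the classes of $\mathrm{tr}_G(x_i)$ are independent in $M^G/p$, whereas you supply a self-contained proof of the essential content of that criterion: your key lemma, that $N$ admits a free direct summand if and only if $N_G N \not\subseteq pN$, proved via the adjunction $\mathrm{Hom}_{\mathbb{Z}_p[G]}(N,\mathbb{Z}_p[G])\cong\mathrm{Hom}_{\mathbb{Z}_p}(N,\mathbb{Z}_p)$, the identity $\varepsilon\circ\phi=f\circ N_G$, and Nakayama's lemma over the local ring $\mathbb{Z}_p[G]$, is exactly the $n=1$ case of the cited result, and your Krull--Schmidt splitting $M\cong N\oplus\mathbb{Z}_p[G]^r$ together with the additivity of $H^0$ and $\hat H^0$ correctly handles the general count. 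All the bookkeeping steps you defer (the Smith normal form comparison of $\mathrm{rk}_p(N^G)$ with $\mathrm{rk}_p(N^G/N_GN)$, and the torsion-freeness argument identifying $N_GN\subseteq pN^G$ with $N_GN\subseteq pN$) check out, so the proposal is a complete and slightly more self-contained version of the paper's proof rather than a genuinely different one.
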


\begin{proof}
According to Proposition 2.1 of \cite{Burns}, for a natural number $n$,  elements $x_1, \ldots, x_n$ of $M$ generate a $\mathbb{Z}_p[G]$-direct summand of $M$ isomorphic to $\mathbb{Z}_p[G]^n$ if and only if the classes of $\mathrm{tr}_G(x_i) := \underset{g \in G}{\sum}gx_i$ in the vector space $M^G/p$ over $\mathbb{Z}/p$ are linearly independent. As a consequence, a maximal free $\mathbb{Z}_p[G]$-direct summand of $M$ is isomorphic to $\mathbb{Z}_p[G]^m$ for $m=\mathrm{rk}_p (H^0(G, M)) - \mathrm{rk}_p (\hat{H}^0(G,M))$. By the Krull-Schmidt theorem and the indecomposability of $\mathbb{Z}_p[G]$, all projective $\mathbb{Z}_p[G]$-lattices are free. Hence, the claim follows.
\end{proof}

\begin{proof}[\textbf{Proof of Theorem \ref{Burnsordinary}}]
Let $P$ be a fixed Sylow $p$-subgroup of $G_{F/K}$. Let $F'$ be the corresponding subfield of $F$. Recall that $S$ is equal to the set $\Sigma_{K,p} \cup \Sigma_{K, \infty} \cup R_{F/K}$. We shall first prove the inequality
\begin{equation}\label{firstclaim}
c(G_{F/K}, \mathcal{E}_F) \leq [F':K] \cdot \big ( \, c(P, \mathcal{E}_{F,S}) + 2|P| \cdot |S_{F,f}| \,  \big )
\end{equation} and then compute an explicit upper bound for $c(P, \mathcal{E}_{F,S})=c(G_{F/F'}, \mathcal{E}_{F,S})$.

We can consider $\mathcal{E}_F$ also as a $\mathbb{Z}_p[P]$-lattice. Then, the $\mathbb{Z}_p[G_{F/K}]$-lattice $\mathcal{E}_F$ is isomorphic to a $\mathbb{Z}_p[G_{F/K}]$-direct summand of $\mathbb{Z}_p[G_{F/K}] \otimes_{\mathbb{Z}_p[P]} \mathcal{E}_F$ (cf. the proof of \cite[Lem. 2.6]{Burns}). This implies
\begin{equation}\label{core-Sylow}
c(G_{F/K}, \mathcal{E}_F) \leq [F':K] \cdot c(P,\mathcal{E}_F)
\end{equation}
because projective $\mathbb{Z}_p[P]$-lattices are free. Let $0 \to \mathcal{E}_F  \to \mathcal{E}_{F,S}  \to \mathfrak{C} \to 0$ be the tautological exact sequence. We have $\mathrm{rk}(\mathfrak{C}) = |S_{F,f}|$ by the Dedekind-Herbrand theorem (cf. \cite[Thm. I.3.7 in page 27]{Grasclassfieldtheory}). By Proposition \ref{algebraiclemma}, we also have
\begin{equation*}
    c(P, \mathcal{E}_F) = \mathrm{rk} (\mathcal{E}_F) - |P| \cdot \mathrm{rk}_p (H^0(P, \mathcal{E}_F)) + |P| \cdot \mathrm{rk}_p (\hat{H}^0(P, \mathcal{E}_F)).
\end{equation*}
In general, we have $\mathrm{rk}_pY \leq \mathrm{rk}_p X + \mathrm{rk}_p Z$ for every exact sequence $X \rightarrow Y \rightarrow Z$ of finitely generated $\mathbb{Z}_p$-modules. Therefore, we obtain the inequalities
\begin{equation*}
\mathrm{rk}_{p}(\hat{H}^{0}(P,\mathcal{E}_F)) \leq \mathrm{rk}_{p}(\hat{H}^{-1}(P,\mathfrak{C})) + \mathrm{rk}_{p}(\hat{H}^{0}(P,\mathcal{E}_{F,S})) \quad \text{and} \quad \mathrm{rk}_p (\mathcal{E}_{F,S}^P) \leq \mathrm{rk}_p (\mathcal{E}^P_F) + \mathrm{rk}_p(\mathfrak{C}^P)
\end{equation*}
from the long exact sequences of cohomology groups associated with the tautological exact sequence. With them, we can check that $c(P, \mathcal{E}_F)$ is bounded above by 
\begin{equation*}
\mathrm{rk}(\mathcal{E}_F) - |P| \cdot \mathrm{rk}_p(H^0(P, \mathcal{E}_{F,S})) + |P| \cdot \mathrm{rk}_p(\hat{H}^0(P, \mathcal{E}_{F,S})) + |P| \cdot \mathrm{rk}_p(\mathfrak{C}^P) + |P| \cdot \mathrm{rk}_p(\hat{H}^{-1}(P, \mathfrak{C})),
\end{equation*}
which is equal to 
\begin{equation*}
    c(P, \mathcal{E}_{F,S}) - |S_{F,f}| + |P| \cdot \mathrm{rk} (\mathfrak{C}^P) + |P| \cdot \mathrm{rk}_p(\hat{H}^{-1}(P, \mathfrak{C}))
\end{equation*}
by Proposition \ref{algebraiclemma}. The inequality $(\ref{firstclaim})$ follows from $(\ref{core-Sylow})$ and the fact that both $\mathrm{rk}_p (H^{-1}(P, \mathfrak{C}))$ and $\mathrm{rk}(\mathfrak{C}^P)$ are bounded above by $|S_{F,f}|$.
\vskip 5pt

 By Kummer theory, $\mathbb{Z}_{p} \otimes_{\mathbb{Z}} \mathcal{O}^{\times}_{F,S}$ is isomorphic to  $H^{1}(G_{F,S}, \mathbb{Z}_{p}(1))$ as $\mathbb{Z}_p[P]$-modules. By applying $(\ref{prop-(iii)2})$ for $i=2, \Sigma = S$, and $T=\mathbb{Z}_p$, we obtain the exact sequence 
\begin{equation*}
    0 \longrightarrow \big ( H^{2}(G_{F,S}, \mathbb{Z}_{p}(1))_{\mathrm{tor}} \big )^{\vee} \longrightarrow H^{2}_{c}(\mathcal{O}_{F,S}, \mathbb{Z}_{p}) \longrightarrow H^{1}(G_{F,S}, \mathbb{Z}_{p}(1))^{\ast} \longrightarrow 0.
\end{equation*}
Since $H^{2}(G_{F,S}, \mathbb{Z}_{p}(1))_{\mathrm{tor}}$ is torsion, we obtain isomorphisms of $\mathbb{Z}_{p}[P]$-modules
\begin{equation*}
H^{2}_{c}(\mathcal{O}_{F,S}, \mathbb{Z}_{p})^{\ast} \cong \big ( H^{1}(G_{F,S}, \mathbb{Z}_{p}(1))^{\ast} \big )^{\ast} \cong \big ( H^{1}(G_{F,S}, \mathbb{Z}_{p}(1))_{\mathrm{tf}} \big )^{\ast \ast} \cong (\mathcal{E}_{F,S})^{\ast \ast}.
\end{equation*}
The canonical orthonormal pairing $\mathbb{Z}_p[P] \times \mathbb{Z}_p[P] \to \mathbb{Z}_p$ induces an isomorphism $\mathbb{Z}_p[P] \cong \mathbb{Z}_p[P]^{\ast}$ of $\mathbb{Z}_p[P]$-modules. Therefore, we infer $c(P, \mathcal{E}_{F,S}) = c(P, H^2_c(\mathcal{O}_{F,S}, \mathbb{Z}_p)_{\mathrm{tf}})$. Hence, by Proposition \ref{algebraiclemma} $c(P, \mathcal{E}_{F,S})$ is equal to 
\begin{equation*}
     \mathrm{rk} (H_c^2(\mathcal{O}_{F,S}, \mathbb{Z}_p)_{\mathrm{tf}}) - |P| \cdot \mathrm{rk}_p (H^0(P, H_c^2(\mathcal{O}_{F,S}, \mathbb{Z}_p)_{\mathrm{tf}}))  + |P| \cdot \mathrm{rk}_p (\hat{H}^0(P, H^2_c(\mathcal{O}_{F,S}, \mathbb{Z}_p)_{\mathrm{tf}})).
\end{equation*}
We remark that it has been already verified in \cite[Lem. 2.7]{Burns} that 
\begin{equation}\label{lemma-Burnspaper}
\begin{aligned}
\mathrm{rk}_p(\hat{H}^0(P, H^2_c(\mathcal{O}_{F,S},\mathbb{Z}_p)_{\mathrm{tf}})) & \leq  |P| \cdot \bigg ( 2+ (1+|P|) \big ( \, \mathrm{rk}_p(\mathrm{Cl}_{S}(F(\zeta_p))) + |S_{f,F(\zeta_p)}|-1 \, \big ) \bigg ) \\
& \leq 2 \cdot |P|^2 \cdot  \big ( \, \mathrm{rk}_p (\mathrm{Cl}_S(F(\zeta_p))) + |S_{F(\zeta_p),f}| \, \big ).
\end{aligned}
\end{equation}
Thus, it remains to find an upper bound of $\mathrm{rk} ( H_c^2(\mathcal{O}_{F,S}, \mathbb{Z}_p)_{\mathrm{tf}} )  - |P| \cdot \mathrm{rk}_p ( H^0(P, H_c^2(\mathcal{O}_{F,S}, \mathbb{Z}_p)_{\mathrm{tf}}) )$.

\vskip 5pt

First, we have the equalities
\begin{equation*}
\begin{aligned}
    \mathrm{rk}(H^0(P, H^2_c(\mathcal{O}_{F,S}, \mathbb{Z}_p))) & = \mathrm{rk}(H^{2}_c(\mathcal{O}_{F',S}, \mathbb{Z}_p)) \\
    & = \mathrm{rk}(H^1_c(\mathcal{O}_{F',S}, \mathbb{Z}_p)) + \mathrm{rk}(H^3_c(\mathcal{O}_{F',S}, \mathbb{Z}_p)) \\
    & = \mathrm{rk}(B_{F'}(\mathbb{Z}_p)) + \mathrm{rk}(H^2(G_{F',S}, \mathbb{Z}_p(1))) + \mathrm{rk}(H^0(G_{F',S}, \mathbb{Z}_p(1))).
\end{aligned}
\end{equation*}
The first equality follows from Proposition \ref{Galoiscohomology -1}(i), the second one follows from Proposition \ref{Galoiscohomology -1}(v), and the last one can be checked by using the exact sequences $(\ref{prop-(iii)1})$, $(\ref{prop-(iii)2})$ for the case $L=F', i=3, \Sigma = S$, and $T=\mathbb{Z}_p$.

For any finitely generated $\mathbb{Z}_p[P]$-module $M$, it is straightforward to check that $$\mathrm{rk}_p((M_{\mathrm{tf}})^P) = \mathrm{rk}((M_{\mathrm{tf}})^P) = \mathrm{rk}(M^P).$$ As a result, we deduce 
\begin{equation}\label{equality-leftpart}
\begin{aligned}
\mathrm{rk}(H^2_c(\mathcal{O}_{F,S}, \mathbb{Z}_p)_{\mathrm{tf}}) - |P| \cdot \mathrm{rk}_p(H^0(P, H^2_c(\mathcal{O}_{F,S}, \mathbb{Z}_p)_{\mathrm{tf}})) 
& \leq \mathrm{rk}(H^2_c(\mathcal{O}_{F,S},\mathbb{Z}_p)) - |P| \cdot \mathrm{rk}(B_{F'}(\mathbb{Z}_p)) \\
& = \mathrm{rk}(H^2_c(\mathcal{O}_{F,S},\mathbb{Z}_p)) - \mathrm{rk}(B_F(\mathbb{Z}_p)) \\
& = \mathrm{rk}(H^2(G_{F,S}, \mathbb{Z}_p(1))) + \mathrm{rk}(H^0(G_{F,S}, \mathbb{Z}_p(1))) \\
& \leq \mathrm{rk}(H^2(G_{F,S}, \mathbb{Z}_p(1))) + 1.
\end{aligned}
\end{equation}
The equality on the third line follows from Lemma \ref{Galoiscohomology -1}(vi). The last equality follows from Proposition \ref{Galoiscohomology -1}(v) and the exact sequences $(\ref{prop-(iii)1})$, $(\ref{prop-(iii)2})$ for the case $L=F, i=3, \Sigma=S$, and $T=\mathbb{Z}_p$.

\vskip 5pt

By the acyclicity of $C(G_{F,S},T)$ and $C(G_{F(\zeta_p),S},T)$ in degrees larger than $2$, the first descent isomorphism in Proposition \ref{Galoiscohomology -1}(i) implies an isomorphism $$H_0(G_{F(\zeta_p)/F}, H^2(G_{F(\zeta_p),S}, \mathbb{Z}_p(1))) \cong H^2(G_{F, S}, \mathbb{Z}_p(1)).
$$ From this, we obtain
\begin{equation}\label{leftpart-ref}
\begin{aligned}
    \mathrm{rk}_p(H^2(G_{F,S}, \mathbb{Z}_p(1))) & \leq \mathrm{rk}_p(H^2(G_{F(\zeta_p),S},\mathbb{Z}_p(1))) \\
    & = \mathrm{rk}_p(H^2(G_{F(\zeta_p),S},\mathbb{Z}_p(1)/p)) \\
    & = \mathrm{rk}_p(H^2(G_{F(\zeta_p),S}, \mathbb{Z}/p)) \\
    & = \mathrm{rk}_p ( \mathrm{Cl}_S(F(\zeta_p))) + |S_{F(\zeta_p),f}| -1.
\end{aligned}
\end{equation}
The second equality follows from the acyclicity of $C(G_{F(\zeta_p),S},\mathbb{Z}_p(1))$ in degrees larger than $2$. The last equality follows from the Poitou-Tate theorem (cf. \cite[Thm. 10.7.3]{NWS}). From $(\ref{lemma-Burnspaper})$, $(\ref{equality-leftpart})$, and $(\ref{leftpart-ref})$, we infer
\begin{equation*}
    c(P, \mathcal{E}_{F,S})  \leq (2\cdot |P|^3+1) \cdot \big (\mathrm{rk}_p(\mathrm{Cl}_S(F(\zeta_p)))+|S_{F(\zeta_p),f}| \big ).
\end{equation*}
Applying the above inequality to $(\ref{firstclaim})$, we obtain
\begin{equation*}
\begin{aligned}
    c(G_{F/K}, \mathcal{E}_F) & \leq [F':K] \cdot \big ( \, (2 \cdot |P|^3+1) \cdot ( \, \mathrm{rk}_p(\mathrm{Cl}_S(F(\zeta_p))) + |S_{F(\zeta_p),f}| \, ) + 2 \cdot |P| \cdot |S_{F,f}| \, \big ) \\
    & \leq [F':K] \cdot \big ( \, (2 \cdot |P|^3+1) \cdot \mathrm{rk}_p(\mathrm{Cl}_S(F(\zeta_p))) + (2 \cdot |P|^3+2 \cdot |P|+1) \cdot |S_{F(\zeta_p),f}| \big ) \\
    & \leq 3 \cdot |G_{F/K}| \cdot |P|^2 \cdot \big ( \mathrm{rk}_p(\mathrm{Cl}_S(F(\zeta_p))) + |S_{F(\zeta_p),f}| \big ).
\end{aligned}
\end{equation*}
\end{proof}

\begin{remark}\label{Minkowski}
Theorem \ref{Burnsordinary} provides a lower bound for the $\mathbb{Z}_p$-rank of maximal projective $\mathbb{Z}_p[G_{F/K}]$-direct summand of $\mathcal{E}_F$. When $F/K$ is a $p$-extension, projective $\mathbb{Z}_p[G_{F/K}]$-lattices are free. Since the difference between $E_F$ and $\mathcal{O}_F$ is controlled by the cyclic group $\mu_F$, Theorem \ref{Burnsordinary} can be used to show the existence of free $(\mathbb{Z}/p)[G_{F/K}]$-direct summands of $\mathcal{O}^{\times}_F/p$. The free $(\mathbb{Z}/p)[G_{F/K}]$-direct summands of $\mathcal{O}^{\times}_F/p$ (cf. Minkowski units in the sense of \cite{HajirMaireRamakrishna1}) are very useful in the theory of tamely ramified pro-$p$ extensions of number fields. Let $V_{\emptyset}(F)$ be the subgroup of $F^{\times}$ consisting of elements $\alpha \in F^{\times}$ such that the principal ideal $(\alpha)$ is the $p$-th power of a fractional ideal of $F$. By the well-known exact sequence (cf. \cite[Prop. 10.7.2]{NWS})
\begin{equation*}
0 \longrightarrow \mathcal{O}^{\times}_F/p \longrightarrow V_{\emptyset}(F)/F^{\times \, p} \longrightarrow \mathrm{Cl}_{F}[p] \longrightarrow 0,
\end{equation*}
if $\mathcal{O}^{\times}_F/p$ has free components then so does $V_{\emptyset}(F)/p$ because $(\mathbb{Z}/p)[G_{F/K}]$ is a Frobenius ring. Via group-theoretic arguments and the Gras-Munnier theorem (cf. \cite[Chap. V]{Grasclassfieldtheory}, \cite{GrasMunnier}), many problems about the tamely ramified pro-$p$ extensions of number fields can be solved by using $(\mathbb{Z}/p)[G_{F/K}]$-structure of $V_{\emptyset}(F)/p$ for suitably chosen $F/K$. The universal property of the free components of $V_{\emptyset}(F)/p$ can be used to find a set $S$ of non-$p$-adic places of $F$ with desirable properties for the specific problem. Interested readers can refer to \cite{HajirMaire, HajirMaireRamakrishna1, HajirMaireRamakrishna2, LeeLim, Ozaki}.
\end{remark}

\section{The Krull-Schmidt decompositions of unit groups in cyclic $p$-extensions}\label{sectioncyclic}

From now on, we use $\Gamma$ to denote a cyclic group of order $p^n$ for a fixed natural number $n$. For each $i \in [n]^{\ast}$, we write $\Gamma_i$ for the subgroup of $\Gamma$ of order $p^i$. Let $L_n/L$ be a cyclic extension of number fields of degree $p^n$. We fix an identification $G_{L_n/L} \cong \Gamma$. To simplify the arguments, we will further assume $N_{L_n/L}(\mu_{L_n}) = \mu_L$.  This condition is equivalent to the cohomological triviality of $\mu_{L_n}$ as a $\Gamma$-module (cf. \cite[Rem. 2.2]{BLM}).

\begin{definition}
A Yakovlev diagram $(M_i, \alpha_i, \beta_i)$ is a diagram
\begin{equation*}
\begin{tikzcd}
M_{1} \arrow[r, "\alpha_{1}", shift left] & M_{2} \arrow[l, "\beta_{1}", shift left] \arrow[r, "\alpha_{2}", shift left] & \cdots \arrow[l, "\beta_{2}", shift left] \arrow[r, "\alpha_{n-2}", shift left] & M_{n-1} \arrow[l, "\beta_{n-2}", shift left] \arrow[r, "\alpha_{n-1}", shift left] & M_{n} \arrow[l, "\beta_{n-1}", shift left]
\end{tikzcd}
\end{equation*}
such that 
\begin{itemize}
\item each $M_{i}$ is a finite $(\mathbb{Z}/p^{i})[\Gamma/\Gamma_i]$-module;
\item each $\alpha_{i}$ and $\beta_{i}$ is a $\mathbb{Z}[\Gamma]$-morphism such that $\beta_i \circ \alpha_i$ and $\alpha_i \circ \beta_i$ are respectively induced by the norm $N_{\Gamma_{i+1}/\Gamma_i}: = \sum_{\gamma \in \Gamma_{i+1}/\Gamma_i} \gamma$ and the multiplication by $p$.
\end{itemize}
\end{definition}

Yakovlev diagrams form a category $\mathfrak{M}_n$ with following morphisms.

\begin{definition}
Let $(M_{i}, \alpha_{i}, \beta_{i})$ and $(M'_{i}, \alpha'_{i}, \beta'_{i})$ be two Yakovlev diagrams. A morphism from $(M_{i}, \alpha_{i}, \beta_{i})$ to $(M'_{i}, \alpha'_{i}, \beta'_{i})$ is a collection of $\mathbb{Z}[\Gamma]$-module homomorphisms $\{\kappa_{i} : M_{i} \to M'_{i}\}_{i \in [n]}$ such that $\kappa_{i+1} \circ \alpha_{i} = \alpha'_{i} \circ \kappa_{i}$ and $\beta'_{i} \circ \kappa_{i+1} = \kappa_{i} \circ \beta_{i}$ for every $i \in [n-1]$.
\end{definition}

In particular, the morphism $\{ \kappa_i : M_i \to M'_i\}_{i \in [n]}$ is an isomorphism if and only if each $\kappa_i$ is an isomorphism. Yakovlev diagrams are useful for studying the Krull-Schmidt decomposition of $\mathbb{Z}_{p}[\Gamma]$-lattices through the following propositions.

\begin{proposition}(\cite[Prop. 1.3]{Yakovlev2})
Let $M$ be a $\mathbb{Z}_{p}[\Gamma]$-lattice. Then
\begin{small}
\begin{equation*}
\begin{tikzcd}
{H^{1}(\Gamma_1,M)} \arrow[r, shift left]  & {H^{1}(\Gamma_2,M)} \arrow[l, shift left] \arrow[r,  shift left] &   \cdots \arrow[l, shift left] \arrow[r, shift left]   & {H^{1}(\Gamma_{n-1},M)} \arrow[l, shift left] \arrow[r, shift left]   & {H^{1}(\Gamma,M)} \arrow[l, shift left]
\end{tikzcd}
\end{equation*}
\end{small}
is a Yakovlev diagram, where the morphisms between cohomology groups are the restrction and corestriction maps. We shall denote this diagram by $\Delta(M)$.
\end{proposition}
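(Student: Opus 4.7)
The plan is to take $M_i := H^1(\Gamma_i, M)$, $\alpha_i := \mathrm{cor}^{\Gamma_{i+1}}_{\Gamma_i}$, and $\beta_i := \mathrm{res}^{\Gamma_{i+1}}_{\Gamma_i}$, and then verify each clause of the definition of a Yakovlev diagram in turn. Everything reduces to classical facts about group cohomology applied to the tower $\Gamma_1 \subset \Gamma_2 \subset \cdots \subset \Gamma_n = \Gamma$, together with the abelian-ness of $\Gamma$ (which makes every $\Gamma_i$ normal).

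First I would check that each $M_i$ has the claimed module structure. Since $M$ is a finitely generated $\mathbb{Z}_p$-module and $\Gamma_i$ is a finite group of order $p^i$, the group $H^1(\Gamma_i, M)$ is finitely generated over $\mathbb{Z}_p$ and is annihilated by $|\Gamma_i| = p^i$, hence is a finite $\mathbb{Z}/p^i$-module. The residual action of $\Gamma/\Gamma_i$ comes from the standard conjugation action on the cohomology of a normal subgroup, which is well-defined because $\Gamma$ is abelian; this endows $M_i$ with the structure of a $(\mathbb{Z}/p^i)[\Gamma/\Gamma_i]$-module. The maps $\alpha_i$ and $\beta_i$ are $\mathbb{Z}[\Gamma]$-linear by functoriality of restriction and corestriction with respect to this conjugation action.

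It remains to check the two composition identities. The identity $\alpha_i \circ \beta_i = p$ on $H^1(\Gamma_{i+1}, M)$ is the standard formula $\mathrm{cor} \circ \mathrm{res} = [\Gamma_{i+1}:\Gamma_i] = p$. For $\beta_i \circ \alpha_i = \mathrm{res} \circ \mathrm{cor}$ on $H^1(\Gamma_i, M)$, I would invoke Mackey's double coset formula, which collapses in the present normal-subgroup setting (single double coset, trivial conjugation) to $\sum_{\gamma \in \Gamma_{i+1}/\Gamma_i} \gamma \cdot (-)$; this is precisely the norm $N_{\Gamma_{i+1}/\Gamma_i}$ acting on $M_i$ via the $\Gamma/\Gamma_i$-structure established above. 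The proof is essentially a bookkeeping exercise, and the main thing to be careful about is the orientation convention between restriction and corestriction; there is no substantive obstacle, which is why the classical reference \cite{Yakovlev2} simply states the result.
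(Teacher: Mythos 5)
The paper offers no proof of this proposition: it is quoted verbatim from Yakovlev (\cite[Prop. 1.3]{Yakovlev2}), so there is nothing internal to compare against. Your sketch is the standard argument and is correct in substance: $H^1(\Gamma_i,M)$ is a finitely generated $\mathbb{Z}_p$-module killed by $|\Gamma_i|=p^i$, hence finite of exponent dividing $p^i$; the conjugation action of $\Gamma$ descends to $\Gamma/\Gamma_i$ because the inner action of $\Gamma_i$ on its own cohomology is trivial; restriction and corestriction are equivariant for this action; $\mathrm{cor}\circ\mathrm{res}=[\Gamma_{i+1}:\Gamma_i]=p$; and $\mathrm{res}\circ\mathrm{cor}=N_{\Gamma_{i+1}/\Gamma_i}$. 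One small correction to your parenthetical on the Mackey formula: since $\Gamma_i$ is normal in $\Gamma_{i+1}$ there are $[\Gamma_{i+1}:\Gamma_i]=p$ double cosets (each an ordinary coset), not one; what trivializes is the inner restriction and corestriction (because ${}^{\gamma}\Gamma_i=\Gamma_i$), and it is exactly the sum over these $p$ cosets of the conjugation maps $\gamma_*$ that yields the norm element $\sum_{\gamma\in\Gamma_{i+1}/\Gamma_i}\gamma$ acting through the $(\mathbb{Z}/p^i)[\Gamma/\Gamma_i]$-structure. This does not affect the validity of the conclusion.
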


\begin{proposition}\label{Yakovlevtheorem}(\cite[Thm. 2.1]{Yakovlev2})
Let $M$ and $M'$ be two $\mathbb{Z}_{p}[\Gamma]$-lattices. If $\Delta(M)$ and $\Delta(M')$ are equivalent, then there are non-negative integers $\{ m_i \}_{i \in [n]^{\ast}}$ and $\{m'_i\}_{i \in [n]^{\ast}}$ such that we have a $\mathbb{Z}_p[\Gamma]$-isomorphism
\begin{equation*}
    M \oplus \underset{i=0}{\overset{n}{\bigoplus}} \, \mathbb{Z}_p[\Gamma/\Gamma_i]^{m_i} \cong M' \oplus \underset{i=0}{\overset{n}{\bigoplus}} \, \mathbb{Z}_p[\Gamma/\Gamma_i]^{m'_i}.
\end{equation*}
\end{proposition}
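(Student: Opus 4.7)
The plan is to establish a slightly stronger statement from which the proposition follows formally: the Yakovlev-diagram functor $\Delta$ induces a fully faithful functor from the quotient category of $\mathbb{Z}_p[\Gamma]$-lattices by the ideal of morphisms factoring through a finite direct sum of the permutation lattices $\mathbb{Z}_p[\Gamma/\Gamma_i]$ into $\mathfrak{M}_n$. Granting this, an equivalence $\Delta(M) \cong \Delta(M')$ lifts to a $\mathbb{Z}_p[\Gamma]$-linear map $\tilde{\kappa}: M \oplus Q \to M' \oplus Q'$ for suitable permutation lattices $Q$ and $Q'$. Since $\tilde{\kappa}$ induces isomorphisms on each $H^1(\Gamma_i,-)$, its kernel and cokernel are cohomologically trivial over every subgroup of $\Gamma$; because $\mathbb{Z}_p[\Gamma]$ is a local ring (as $\Gamma$ is a finite $p$-group), such lattices are free $\mathbb{Z}_p[\Gamma]$-modules, hence permutation lattices of type $\Gamma/\Gamma_0$. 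A routine splitting argument then converts the resulting stable isomorphism into the direct-sum decomposition claimed in the proposition.

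The key step is therefore a lifting theorem for morphisms of Yakovlev diagrams, which I would prove by induction on $n$. The base case $n=1$ rests on Diederichsen's classification: every $\mathbb{Z}_p[\Gamma_1]$-lattice decomposes as $\mathbb{Z}_p^{a} \oplus \mathbb{Z}_p[\zeta_p]^{b} \oplus \mathbb{Z}_p[\Gamma_1]^{c}$, and only the middle summands detect $H^1(\Gamma_1,-)$, so any $\mathbb{F}_p$-linear map on cohomology is realised on the $\mathbb{Z}_p[\zeta_p]$-parts while any discrepancy is absorbed into the two permutation pieces $\mathbb{Z}_p = \mathbb{Z}_p[\Gamma/\Gamma_1]$ and $\mathbb{Z}_p[\Gamma_1] = \mathbb{Z}_p[\Gamma/\Gamma_0]$. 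For the inductive step, I would truncate the Yakovlev diagram by discarding the rightmost vertex $H^1(\Gamma,-)$; using Shapiro's lemma together with inflation-restriction, this truncation is essentially the Yakovlev datum of $M$ viewed over the subgroup $\Gamma_{n-1}$, to which the inductive hypothesis delivers the morphisms $\kappa_1,\dots,\kappa_{n-1}$ at the lattice level, after possibly enlarging $M$ and $M'$ by permutation summands. The remaining task is to extend this partial lift across the top edges $\alpha_{n-1}$ and $\beta_{n-1}$ while preserving the compatibility relations $\beta_{n-1}\alpha_{n-1} = N_{\Gamma/\Gamma_{n-1}}$ and $\alpha_{n-1}\beta_{n-1} = p$.

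The main obstacle is this final extension step. The obstruction lives in a cohomology group that measures whether a prescribed class in $H^1(\Gamma_{n-1}, M')$ lies in the image of corestriction from $H^1(\Gamma, M')$, and one must show that it can always be annihilated by adjoining suitable permutation summands $\mathbb{Z}_p[\Gamma/\Gamma_i]$ to both lattices. The norm/multiplication-by-$p$ compatibilities imposed on the Yakovlev diagram, combined with the fact that adjoining a free $\mathbb{Z}_p[\Gamma]$-summand makes restriction and corestriction suitably surjective at every level, are precisely what make this annihilation possible. Once the lifting theorem is in hand, the remainder of the argument is a formal Krull-Schmidt-style splitting in the category of $\mathbb{Z}_p[\Gamma]$-lattices combined with the observation already used above that free $\mathbb{Z}_p[\Gamma]$-modules themselves figure among the allowed permutation lattices.
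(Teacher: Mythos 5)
The paper offers no proof of this proposition; it is quoted directly from Yakovlev \cite[Thm.~2.1]{Yakovlev2}, so there is no internal argument to compare yours against, and you are in effect sketching a proof of Yakovlev's theorem itself. Your high-level shape (a lifting statement for diagram morphisms followed by a splitting argument) is reasonable, but the sketch has a concrete false step. You claim that since $\tilde{\kappa}$ induces isomorphisms on each $H^1(\Gamma_i,-)$, its kernel and cokernel are cohomologically trivial and hence free. For a cyclic $p$-group, cohomological triviality requires the vanishing of $\hat{H}^0(\Gamma_i,-)$ as well as of $H^1(\Gamma_i,-)$ for every $i$, and the Yakovlev diagram records only the latter. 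Concretely, the zero map $\mathbb{Z}_p \to 0$, or multiplication by $p$ on $\mathbb{Z}_p$, induces an isomorphism on every $H^1(\Gamma_i,-)$ (all these groups vanish), yet the kernel, respectively cokernel, is neither cohomologically trivial nor $\mathbb{Z}_p[\Gamma]$-free. This is exactly why the theorem's conclusion allows arbitrary permutation lattices $\mathbb{Z}_p[\Gamma/\Gamma_i]$ --- which have vanishing $H^1(\Gamma_j,-)$ for all $j$ by Mackey and Shapiro and are therefore invisible to $\Delta$ --- rather than only free summands. Any correct argument must control the $\hat{H}^0$-data that $\Delta$ does not see; your final splitting step silently assumes this away.

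The second gap is that the inductive extension step, which you rightly identify as the main obstacle, is where essentially all of the content of Yakovlev's theorem lives, and you do not resolve it: you assert that the norm and multiplication-by-$p$ compatibilities make the obstruction vanish after adjoining permutation summands, but you neither identify the obstruction group nor show that it can be killed. The full faithfulness of $\Delta$ on the quotient category is likewise asserted rather than proved, and it is not obviously true in the strong form you state. As written, the proposal is a plausible roadmap rather than a proof; for the purposes of this paper the correct move is the one the authors make, namely to cite Yakovlev.
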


For every number field $E$, we will write $I_E$ for the group of fractional ideals of $E$ and $P_E$ for the subgroup of principal fractional ideals of $E$. For every extension $E/E'$ of number fields, we will use $I_{E'}$ and $P_{E'}$ to denote also their images in $I_E$. By the assumption $N_{L_n/L}(\mu_{L_n}) = \mu_L$, we have $H^1(\Gamma_i,\mathcal{E}_{L_n}) \cong H^1(\Gamma_i, E_{L_n}) \cong H^1(\Gamma_i, \mathcal{O}^{\times}_{L_n})$ for every $i \in [n]$. Hence, $\Delta(\mathcal{E}_{L_n})$ can be studied with the following classical results (cf. \cite{Iwasawa1}). 

\begin{proposition}\label{Iwasawa}
For a cyclic extension $L_n/L$ as above the following claims are valid.
\begin{itemize}
\item[(i)] For each $i \in [n]$, we have a $\mathbb{Z}[\Gamma]$-isomorphism $H^{1}(\Gamma_i,\mathcal{E}_{L_n}) \cong P_{L_n}^{\Gamma_i}/P_{L_{n-i}}$.
\item[(ii)] Under the identification of (i), the restriction map $H^1(\Gamma_i, \mathcal{E}_{L_n}) \to H^1(\Gamma_{i-1}, \mathcal{E}_{L_n})$ corresponds to the $\mathbb{Z}[\Gamma]$-morphism  $P_{L_n}^{\Gamma_i}/P_{L_{n-i}} \to P_{L_n}^{\Gamma_{i-1}}/P_{L_{n-i+1}}$ induced by the inclusion $P_{L_n}^{\Gamma_i} \subseteq P_{L_n}^{\Gamma_{i-1}}$.
\item[(iii)] Under the identification of (i), the corestriction map $H^1(\Gamma_{i-1}, \mathcal{E}_{L_n}) \to H^1(\Gamma_i, \mathcal{E}_{L_n})$ corresponds to the $\mathbb{Z}[\Gamma]$-morphism $P_{L_n}^{\Gamma_{i-1}}/P_{L_{n-i}} \to P_{L_n}^{\Gamma_i}/P_{L_{n-i+1}}$ induced by the norm map $N_{\Gamma_i/\Gamma_{i-1}}$.
\end{itemize}
\end{proposition}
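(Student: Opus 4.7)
The plan is to derive all three claims from the long exact $\Gamma_i$-cohomology sequence attached to the tautological short exact sequence of $\mathbb{Z}[\Gamma]$-modules
\[
1 \longrightarrow \mathcal{O}_{L_n}^\times \longrightarrow L_n^\times \longrightarrow P_{L_n} \longrightarrow 1.
\]
Hilbert 90 gives $H^1(\Gamma_i, L_n^\times)=0$, while the identifications $(L_n^\times)^{\Gamma_i}=L_{n-i}^\times$, $(\mathcal{O}_{L_n}^\times)^{\Gamma_i}=\mathcal{O}_{L_{n-i}}^\times$, and the fact that the image of $L_{n-i}^\times$ in $P_{L_n}$ is precisely $P_{L_{n-i}}$ splice the tail of the long exact sequence into a canonical $\mathbb{Z}[\Gamma]$-isomorphism
\[
\delta_i: P_{L_n}^{\Gamma_i}/P_{L_{n-i}} \xrightarrow{\;\sim\;} H^1(\Gamma_i, \mathcal{O}_{L_n}^\times).
\]
The hypothesis $N_{L_n/L}\mu_{L_n}=\mu_L$ makes $\mu_{L_n}$ a cohomologically trivial $\Gamma$-module, so $H^1(\Gamma_i, \mathcal{O}_{L_n}^\times) \cong H^1(\Gamma_i, E_{L_n})$; since this group is $p$-primary torsion, it coincides with $H^1(\Gamma_i, \mathcal{E}_{L_n})$, which establishes (i).

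For (ii) and (iii), the plan is to exploit the naturality of $\delta_i$ under the restriction and corestriction maps attached to the pair $\Gamma_{i-1}\subseteq\Gamma_i$. Restriction applied to the above short exact sequence yields a commutative ladder between the long exact sequences in $\Gamma_i$- and $\Gamma_{i-1}$-cohomology. On $H^0$, the map on $P_{L_n}$ is simply the inclusion $P_{L_n}^{\Gamma_i} \hookrightarrow P_{L_n}^{\Gamma_{i-1}}$ of fixed subgroups, and that on $L_n^\times$ is the inclusion $L_{n-i}^\times \hookrightarrow L_{n-i+1}^\times$. Chasing the diagram, the restriction map on $H^1$, translated through $\delta_i$ and $\delta_{i-1}$, is induced by the inclusion $P_{L_n}^{\Gamma_i}\subseteq P_{L_n}^{\Gamma_{i-1}}$, which is (ii).

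The argument for (iii) is formally identical, with corestriction in place of restriction. The key input is that corestriction on $H^0$ is the norm $N_{\Gamma_i/\Gamma_{i-1}}$: on $P_{L_n}$ this is the ideal-theoretic norm $P_{L_n}^{\Gamma_{i-1}} \to P_{L_n}^{\Gamma_i}$, and on $L_n^\times$ it is the classical field norm $L_{n-i+1}^\times \to L_{n-i}^\times$, which carries $P_{L_{n-i+1}}$ to $P_{L_{n-i}}$. Tracking the corresponding ladder then yields that corestriction on $H^1$ is induced by $N_{\Gamma_i/\Gamma_{i-1}}$ on the relevant quotients, proving (iii). The main thing to verify with care is that every map in sight is $\mathbb{Z}[\Gamma]$-linear and not merely $\mathbb{Z}[\Gamma_i]$-linear, but this is automatic because $\Gamma$ is abelian and therefore normalizes every $\Gamma_j$ and commutes with both the inclusion and norm maps; I expect this to be the only mildly subtle point of the argument.
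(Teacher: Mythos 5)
Your argument is correct and is precisely the classical Iwasawa-style computation that the paper delegates to its citation of \cite[Lem.~3.1]{BLM}: the connecting homomorphism of $1 \to \mathcal{O}_{L_n}^\times \to L_n^\times \to P_{L_n} \to 1$ together with Hilbert 90 and cohomological triviality of $\mu_{L_n}$ gives (i), and compatibility of connecting maps with restriction and corestriction (which act on $H^0$ as inclusion and as $N_{\Gamma_i/\Gamma_{i-1}}$ respectively) gives (ii) and (iii). Note in passing that your version of (iii), with source $P_{L_n}^{\Gamma_{i-1}}/P_{L_{n-i+1}}$ and target $P_{L_n}^{\Gamma_i}/P_{L_{n-i}}$, is the internally consistent one; the subscripts in the paper's statement of (iii) appear to be transposed.
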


\begin{proof}
cf. \cite[Lem. 3.1]{BLM}.
\end{proof}

\begin{proposition}\label{Yakovlevdetermineunit}
Let $L_{n}/L$ be a cyclic extension of number fields as above. Then, the $\mathbb{Z}_{p}[\Gamma]$-module structure of $\mathcal{E}_{L_n}$ is determined by $\Delta(\mathcal{E}_{L_n})$ and $\mathrm{rk}(\mathcal{E}_{L_n})$.
\end{proposition}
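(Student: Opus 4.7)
The plan is to apply Yakovlev's theorem (Proposition \ref{Yakovlevtheorem}) to reduce the problem to determining the multiplicities of the permutation direct summands $\mathbb{Z}_p[\Gamma/\Gamma_i]$ in the Krull-Schmidt decomposition of $\mathcal{E}_{L_n}$, and then to pin those multiplicities down by a rank comparison on the $\Gamma_j$-fixed submodules. Since each $\mathbb{Z}_p[\Gamma/\Gamma_i]$ is cohomologically trivial, the Yakovlev diagram is insensitive to permutation summands; the task therefore splits into (i) showing that the ``non-permutation part'' of the decomposition is determined by $\Delta(\mathcal{E}_{L_n})$, and (ii) showing that the multiplicities of the permutation summands are determined by $\mathrm{rk}(\mathcal{E}_{L_n})$ together with $\Delta(\mathcal{E}_{L_n})$.

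For (i), write $\mathcal{E}_{L_n}\cong Y\oplus\bigoplus_{i=0}^{n}\mathbb{Z}_p[\Gamma/\Gamma_i]^{a_i}$ where $Y$ has no direct summand isomorphic to any $\mathbb{Z}_p[\Gamma/\Gamma_i]$. If $Y'$ is any other $\mathbb{Z}_p[\Gamma]$-lattice with the same Yakovlev diagram and no permutation summands, Proposition \ref{Yakovlevtheorem} supplies permutation lattices $P,P'$ with $Y\oplus P\cong Y'\oplus P'$; cancelling the indecomposable summands via the Krull-Schmidt theorem then forces $Y\cong Y'$, so the isomorphism class of $Y$ is recovered from $\Delta(\mathcal{E}_{L_n})$.

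For (ii), I would examine the $\Gamma_j$-fixed submodules for $j\in[n]^{\ast}$. A direct orbit count on cosets gives $\mathrm{rk}_{\mathbb{Z}_p}(\mathbb{Z}_p[\Gamma/\Gamma_i]^{\Gamma_j})=p^{n-\max(i,j)}$, so
\[
\mathrm{rk}(\mathcal{E}_{L_n}^{\Gamma_j})=\mathrm{rk}(Y^{\Gamma_j})+\sum_{i=0}^{n}a_i\,p^{n-\max(i,j)}.
\]
Since $\mathcal{E}_{L_n}$ is $\mathbb{Z}_p$-torsion-free and taking invariants commutes with $-\otimes_{\mathbb{Z}_p}\mathbb{Q}_p$, the left-hand side equals $\mathrm{rk}(\mathcal{E}_{L_{n-j}})$. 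Because $p$ is odd, $\mathrm{Gal}(\mathbb{C}/\mathbb{R})$ cannot embed into the cyclic $p$-group $\Gamma$, so every infinite place of $L$ splits completely in $L_n$; consequently $\mathrm{rk}(\mathcal{E}_{L_{n-j}})=(\mathrm{rk}(\mathcal{E}_{L_n})+1)/p^j-1$, a quantity determined purely by $\mathrm{rk}(\mathcal{E}_{L_n})$. The quantities $\mathrm{rk}(Y^{\Gamma_j})$ on the right are determined by the isomorphism class of $Y$ secured in step (i). The resulting $(n+1)\times(n+1)$ linear system in the $a_i$ has coefficient matrix $(p^{n-\max(i,j)})_{i,j}$; subtracting each row from the preceding one reduces this to lower-triangular form with nonzero diagonal, so the system admits a unique solution.

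The main obstacle I anticipate is the uniqueness claim in (i): because Yakovlev's theorem only identifies lattices up to a permutation error term, one must invoke the Krull-Schmidt theorem with care to cancel this error without disturbing the genuine summands of $Y$. Once this is handled, the signature argument in (ii)---which crucially uses that $p$ is odd---and the row reduction verifying invertibility of the coefficient matrix are essentially routine.
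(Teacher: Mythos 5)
Your proof is correct and takes essentially the same route as the paper: Yakovlev's theorem plus Krull--Schmidt pins down the non-permutation part, and the permutation multiplicities $a_i$ are then recovered from the rational representation, which the Dirichlet--Herbrand theorem (together with the complete splitting of the infinite places, $p$ being odd) determines from $\mathrm{rk}(\mathcal{E}_{L_n})$ --- the paper extracts the $a_i$ from multiplicities of $\mathbb{C}_p$-characters with prescribed kernel rather than from your equivalent, and correctly verified as invertible, linear system of fixed-point ranks $(p^{n-\max(i,j)})_{i,j}$. One small correction: $\mathbb{Z}_p[\Gamma/\Gamma_i]$ is not cohomologically trivial (for instance $\hat{H}^0(\Gamma,\mathbb{Z}_p)\neq 0$); the fact you actually need, which does hold by Shapiro's lemma, is only that $H^1(\Gamma_j,\mathbb{Z}_p[\Gamma/\Gamma_i])=0$ for all $i,j$, so that the Yakovlev diagram is insensitive to permutation summands.
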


\begin{proof}
By Nakayama's lemma, the permutation lattice $\mathbb{Z}_{p}[\Gamma/\Gamma_i]$ is indecomposable for every $i \in [n]^{\ast}$. Let $\mathcal{E}^{\dagger}_{L_n}$ be the direct sum of all indecomposable direct summands in the Krull-Schmidt decomposition of $\mathcal{E}_{L_n}$ that are not isomorphic to $\mathbb{Z}_p[\Gamma/\Gamma_i]$ for any $i \in [n]^{\ast}$. By Proposition \ref{Yakovlevtheorem}, the isomorphism class of $\mathcal{E}_{L_n}^{\dagger}$ is uniquely determined by $\Delta(\mathcal{E}_{L_n})$, and we have a $\mathbb{Z}_p[\Gamma]$-isomorphism
\begin{equation*}
    \mathcal{E}_{L_n} \cong \mathcal{E}^{\dagger}_{L_n} \oplus \bigoplus_{i \in [n]^{\ast}} \, \mathbb{Z}_p[\Gamma/\Gamma_i]^{a_i}
\end{equation*}
for some non-negative integers $\{a_i\}_{i \in [n]^{\ast}}$.
With $\mathbb{C}_p$ denoting the field of $p$-adic complex numbers, we have an isomorphism $\mathbb{C}_p \otimes_{\mathbb{Z}_p} \mathcal{E}_{L_n} \cong \mathbb{C}_p \otimes_{\mathbb{Z}_p} \mathcal{E}^{\dagger}_{L_n} \oplus \bigoplus_{i \in [n]^{\ast}}\, \mathbb{C}_p[\Gamma/\Gamma_i]^{a_i}$. By the Dirichlet-Herbrand theorem and the semi-simplicity of $\mathbb{C}_p[\Gamma]$, the $\mathbb{C}_p$-representation $\bigoplus_{i \in [n]^{\ast}}\, \mathbb{C}_p[\Gamma/\Gamma_i]^{a_i}$ is uniquely determined by $\Delta(\mathcal{E}_{L_n})$ and $\mathrm{rk}(\mathcal{E}_{L_n})$. The claim follows from the fact that for each $k \in [n]^{\ast}$, the sum $\sum_{0 \leq i \leq k}a_i$ is equal to the common multiplicity in $\bigoplus_{i \in [n]^{\ast}}\, \mathbb{C}_p[\Gamma/\Gamma_i]^{a_i}$ of $p$-adic complex characters of $\Gamma$ with kernel $\Gamma_k$.
\end{proof}

\section{The number of possible Galois structures of units in cyclic $p$-extensions}

In this section, we will study the number of possible $\mathbb{Z}_p[\Gamma]$-structures of $\mathcal{E}_{L_n}$ for a fixed cyclic extension $L_n/L$ of the previous section. When $n=1$, the structure of $\mathcal{E}_{L_1}$ is determined by the group structure of $H^1(\Gamma, \mathcal{E}_{L_1})$. Since $H^1(\Gamma, \mathcal{E}_{L_1})$ has exponent $p$, the number of possible $\mathbb{Z}_p[\Gamma]$-structures of $\mathcal{E}_{L_1}$ is bounded above by $\mathrm{rk}_p(\mathrm{Cl}_L) + |R_{L_1/L}|$ by genus theory (cf. $(\ref{genussequence2})$). As the next simplest case, we will compute explicit upper bounds for the cases $n=2,3$. According to Proposition \ref{Yakovlevdetermineunit}, this number is bounded above by the number of possible equivalence classes of $\Delta(\mathcal{E}_{L_n})$. From now on, we will use simple notations $\delta_i$ and $r_{i,j}$ to denote $\mathrm{rk}_p(\mathrm{Cl}_{L_i})$ and $|R_{L_i/L_j}|$ respectively for $0 \leq j \leq i \leq n$.

\subsection{The counting argument of Burns}

 Let
\begin{equation}\label{Yakovlevdiagram}
\begin{tikzcd}
M_{1} \arrow[r, shift left] & M_{2} \arrow[l, shift left] \arrow[r, shift left] & M_{3} \arrow[l, shift left] \arrow[r, shift left] & \cdots \arrow[l, shift left] \arrow[r, shift left] & M_{n-2} \arrow[l, shift left] \arrow[r, shift left] & M_{n-1} \arrow[l, shift left] \arrow[r, shift left] & M_{n} \arrow[l, shift left]
\end{tikzcd}
\end{equation}
be a Yakovlev diagram in $\mathfrak{M}_n$. For each finite abelian group $\mathfrak{A}$, we write $e(\mathfrak{A})$ for the exponent of $\mathfrak{A}$. In \cite[\S 3.2]{Burns2}, for fixed positive integers $d, m_1, \ldots, m_n$, Burns counted the number of possible Yakovlev diagrams $(\ref{Yakovlevdiagram})$ such that $\mathrm{rk}_p(M_i) \leq d$ and $e(M_i) \leq m_i$ for every $i \in [n]$. The counting argument was applied to Selmer groups of motives \cite{Burns2} and Mordell-Weil groups of abelian varieties \cite{Castillo}.

For a finite abelian $p$-group $\mathfrak{A}$ and $i \in [n]$, let $c_{i}(\mathfrak{A})$ be the number of conjugacy classes of the group $\mathrm{Aut}_{\mathbb{Z}_{p}}(\mathfrak{A})$ of $\mathbb{Z}_{p}$-linear automorphisms of $\mathfrak{A}$ comprising elements of order dividing $p^{n-i}$. Let $\{d_i\}_{i \in [n]}$ be a fixed set of positive integers. By the same argument as in \cite{Burns2}, we can check that the number $\mathcal{N}$ of Yakovlev diagrams $(\ref{Yakovlevdiagram})$ such that $\mathrm{rk}_{p}(M_{i}) \leq d_{i}$ for each $i \in [n]$ is bounded above by
\begin{equation}\label{counting}
\underset{J_{1} \times \cdots \times J_{n}}{\sum} \,\, \underset{i=1}{\overset{n}{\prod}}c_{i}(J_{i}) \cdot \underset{i=1}{\overset{n-1}{\prod}}\mathrm{min}\{e(J_{i}), e(J_{i+1})\}^{2\mathrm{rk}_{p}(J_{i})\mathrm{rk}_{p}(J_{i+1})},
\end{equation}
where for each $i \in [n]$, $J_{i}$ in the sum runs over all abelian $p$-groups satisfying $\mathrm{rk}_p(J_i) \leq d_i$ and $e(J_i) \leq p^{i}$.

The idea behind $(\ref{counting})$ is that the number $\mathcal{N}$ is bounded above by the product of the numbers of possible $(\mathbb{Z}/p^i)[\Gamma/\Gamma_i]$-module structures for $M_i$'s and the numbers of possible $\mathbb{Z}$-module homomorphisms between $M_i$ and $M_{i+1}$. The formula $(\ref{counting})$ can give a huge upper bound for the number of possible $\Delta(\mathcal{E}_{L_n})$ because $\mathrm{rk}_p(H^{1}(G_{i},\mathcal{E}_{L_n}))$ can be large due to the splitting of primes in $R_{L_n/L}$. In this work, we will strengthen $(\ref{counting})$ by using our specific arithmetic context more effectively.

\vskip 5pt

Our idea, presented in the following lemma, is a natural refinement of the above counting argument. For a $\mathbb{Z}_p[\Gamma]$-lattice $\mathfrak{M}$ and an integer $i \in [n]$, we will use the following notations. We remark that the Krull-Schmidt theorem works for finitely generated modules over finite unitary rings (cf. \cite[Thm. 1.4.7]{KrullSchmidt}).

\begin{itemize}
\item $\kappa_i(\mathfrak{M})$: the number of indecomposable direct summands (counted with multiplicity) in a Krull-Schmidt decomposition (as $(\mathbb{Z}/p^i)[\Gamma/\Gamma_i]$-modules) of $H^1(\Gamma_i, \mathfrak{M})$.
\item $\gamma_i(\mathfrak{M})$ : the order of a minimal system of generators of $(\mathbb{Z}/p^i)[\Gamma/\Gamma_i]$-module $H^1(\Gamma_i, \mathfrak{M}_i)$.
\item $n_i(\mathfrak{M})$ : the number of possible $(\mathbb{Z}/p^i)[\Gamma/\Gamma_i]$-structures of $H^1(\Gamma_i, \mathfrak{M})$.
\end{itemize}

\begin{lemma}\label{counting-ref}
Let $\{s_i\}_{i \in [n]}$, $\{ N_i\}_{i \in [n]}$ be fixed positive integers. Suppose that for every $i \in [n]$, we have $\kappa_i(\mathfrak{M}) \leq N_i$ and only $s_i$ designated non-isomorphic indecomposable $(\mathbb{Z}/p^i)[\Gamma/\Gamma_i]$-modules can appear in the Krull-Schmidt decomposition of $H^1(\Gamma_i, \mathfrak{M})$. Then, the following claims are valid.
\begin{itemize}
\item[(i)] $n_i(\mathfrak{M}) \leq \binom{N_i + s_i}{N_i}$.
\item[(ii)] For every $i \in [n-1]$, the number of possible restriction maps from $H^1(\Gamma_{i+1}, \mathfrak{M})$ to $H^1(\Gamma_i, \mathfrak{M})$ is bounded above by $|H^1(\Gamma_i, \mathfrak{M})^{\Gamma_{i+1}}|^{\gamma_{i+1}(\mathfrak{M})}$.
\item[(iii)] For every $i \in [n-1]$, the number of possible corestriction maps from $H^1(\Gamma_i, \mathfrak{M})$ to $H^1(\Gamma_{i+1}, \mathfrak{M})$ is bounded above by $|H^1(\Gamma_{i+1}, \mathfrak{M})[p^i]|^{\gamma_i(\mathfrak{M})}$.
\item[(iv)] The number of possible equivalence classes of $\Delta(\mathfrak{M})$ is bounded above by 
\begin{equation*}
    \prod_{i \in [n]} \, \binom{N_i + s_i}{N_i} \times \prod_{i \in [n-1]} \,  \, |H^1(\Gamma_i, \mathfrak{M})^{\Gamma_{i+1}}|^{\gamma_{i+1}(\mathfrak{M})} \times \prod_{i \in [n-1]} \, |H^1(\Gamma_{i+1}, \mathfrak{M})[p^i]|^{\gamma_i(\mathfrak{M})} .
\end{equation*}
\end{itemize}
\end{lemma}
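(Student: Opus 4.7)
The lemma breaks naturally into four parts, and I would treat (i)--(iii) independently and then assemble (iv) multiplicatively. For (i), the key observation is that, by Krull--Schmidt for finite rings (applicable since $(\mathbb{Z}/p^i)[\Gamma/\Gamma_i]$ is a finite ring, as invoked just before the lemma), any isomorphism class of $H^1(\Gamma_i,\mathfrak{M})$ is determined by the multiset of indecomposable summands appearing in its Krull--Schmidt decomposition. Under the hypotheses, this multiset has size at most $N_i$ and each element is drawn from $s_i$ designated types. The number of multisets of size exactly $k$ from $s_i$ types is $\binom{k+s_i-1}{k}$, and summing over $k=0,1,\ldots,N_i$ gives
\[
\sum_{k=0}^{N_i}\binom{k+s_i-1}{k}=\binom{N_i+s_i}{N_i}
\]
by the hockey-stick identity.

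For (ii), a $\mathbb{Z}[\Gamma]$-morphism $H^1(\Gamma_{i+1},\mathfrak{M}) \to H^1(\Gamma_i,\mathfrak{M})$ is determined by its values on a minimal generating set of size $\gamma_{i+1}(\mathfrak{M})$. Since a normal subgroup acts trivially on its own cohomology, every element of $H^1(\Gamma_{i+1},\mathfrak{M})$ is $\Gamma_{i+1}$-fixed, so by $\mathbb{Z}[\Gamma]$-equivariance the image of each generator must lie in $H^1(\Gamma_i,\mathfrak{M})^{\Gamma_{i+1}}$. This yields at most $|H^1(\Gamma_i,\mathfrak{M})^{\Gamma_{i+1}}|^{\gamma_{i+1}(\mathfrak{M})}$ possible maps. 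An analogous argument gives (iii): the corestriction is determined by the images of a minimal generating set of $H^1(\Gamma_i,\mathfrak{M})$ of cardinality $\gamma_i(\mathfrak{M})$, each of whose elements is annihilated by $p^i$ (the exponent of the source), forcing each image to lie in $H^1(\Gamma_{i+1},\mathfrak{M})[p^i]$.

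Finally, for (iv), a Yakovlev diagram representing the equivalence class of $\Delta(\mathfrak{M})$ is specified by the isomorphism classes of the nodes $H^1(\Gamma_i,\mathfrak{M})$ together with the collection of restriction and corestriction maps, so the number of equivalence classes is bounded above by the product of the bounds in (i)--(iii) over the appropriate ranges of $i$, which is exactly the stated expression. The main subtlety I anticipate is purely bookkeeping: in (ii)--(iii) one bounds the number of $\mathbb{Z}[\Gamma]$-module homomorphisms by the number of set-theoretic assignments of generators into a constrained subset of the target, without verifying that every such assignment extends to a genuine module homomorphism. Since only an upper bound is needed this is harmless, but the point should be flagged explicitly in the write-up. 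No step is genuinely hard; the value of the lemma lies in replacing the cruder bound (\ref{counting}) of Burns by one that exploits the $\Gamma$-equivariant structure of the cohomology groups directly.
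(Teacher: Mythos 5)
Your proposal is correct and follows essentially the same route as the paper's (much terser) proof: combinations with repetition for (i), determination of an equivariant map by images of a minimal generating set constrained to $H^1(\Gamma_i,\mathfrak{M})^{\Gamma_{i+1}}$ (resp. to the $p^i$-torsion) for (ii) and (iii), and the product bound for (iv). Your explicit verification of the identity $\sum_{k=0}^{N_i}\binom{k+s_i-1}{k}=\binom{N_i+s_i}{N_i}$ and your remark that over-counting set-theoretic assignments of generators is harmless for an upper bound are exactly the details the paper leaves implicit.
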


\begin{proof}
Claim (i) follows from the Krull-Schmidt theorem and the formula for the \textit{combinations with repetitions}. The restriction and corestriction maps are determined by the images of the $\Gamma$-module generators. Claim (ii) follows from the fact $H^1(\Gamma_{i+1}, \mathfrak{M})$ is fixed by $\Gamma_{i+1}$, and claim (iii) follows from $p^iH^1(\Gamma_i, \mathfrak{M})=0$. The last claim follows from claims (i), (ii), (iii) (The reader can refer to \cite[\S 3.2]{Burns2} for more detail).
\end{proof}

\begin{remark}\label{si}
\begin{itemize}
\item[(i)] We have $s_n \leq n$ by the structure theorem for finitely generated abelian groups.
\item[(ii)] By the Jordan normal form theorem, we have $s_1 \leq p^{n-1}$.
\end{itemize}
\end{remark}

\begin{lemma}\label{nakayama}
For a $\mathbb{Z}_p[\Gamma]$-lattice $\mathfrak{M}$, we have $\gamma_i(\mathfrak{M}) \leq \mathrm{v}_p(H^1(\Gamma_i, \mathfrak{M})^{\Gamma})$ for every $i \in [n-1]$ and $\gamma_n(\mathfrak{M}) = \mathrm{rk}_p(H^1(\Gamma, \mathfrak{M}))$.
\end{lemma}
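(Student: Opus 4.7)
The plan is to handle the two claims separately, and in each case to translate the question about minimal generation into a computation of $\mathbb{F}_p$-dimensions via Nakayama's lemma.

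For the equality $\gamma_n(\mathfrak{M}) = \mathrm{rk}_p(H^1(\Gamma,\mathfrak{M}))$, the quotient $\Gamma/\Gamma_n$ is trivial, so $H^1(\Gamma,\mathfrak{M})$ is just a $\mathbb{Z}/p^n$-module. The minimal number of generators of a finitely generated module over the local ring $\mathbb{Z}/p^n$ is, by Nakayama, the $\mathbb{F}_p$-dimension of its reduction modulo $p$, and this is the definition of $\mathrm{rk}_p$. So this half is a direct translation of definitions.

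For the inequality at $i \in [n-1]$, set $M := H^1(\Gamma_i,\mathfrak{M})$ and $G := \Gamma/\Gamma_i$, a nontrivial cyclic $p$-group of order $p^{n-i}$. The first step is to apply Nakayama to the ring $R := (\mathbb{Z}/p^i)[G]$. Because $\mathbb{Z}/p^i$ is local with residue field $\mathbb{F}_p$ and $G$ is a finite $p$-group, $R$ is local with residue field $\mathbb{F}_p$ and maximal ideal $\mathfrak{m}=(p,I_G)$, where $I_G$ is the augmentation ideal. Consequently
\[
\gamma_i(\mathfrak{M}) \;=\; \dim_{\mathbb{F}_p}(M/\mathfrak{m}M) \;=\; \mathrm{rk}_p(M_G),
\]
where $M_G := M/I_G M$ denotes the coinvariants. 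The second step uses cyclicity: if $\sigma \in G$ is a generator, then $I_G = (\sigma-1)R$, so $M_G$ is the cokernel and $M^G$ the kernel of the endomorphism $\sigma-1$ on the finite abelian group $M$, whence $|M^G|=|M_G|$. Moreover, since $\Gamma_i$ is normal in $\Gamma$, the conjugation action of $\Gamma$ on its cohomology $M$ factors through $G$, so $M^\Gamma = M^G$. Combining these identifications with the elementary bound $\mathrm{rk}_p(A)\le \mathrm{v}_p(A)$ for any finite abelian $p$-group $A$, I would conclude
\[
\gamma_i(\mathfrak{M}) \;=\; \mathrm{rk}_p(M_G) \;\le\; \mathrm{v}_p(M_G) \;=\; \mathrm{v}_p(M^G) \;=\; \mathrm{v}_p\bigl(H^1(\Gamma_i,\mathfrak{M})^\Gamma\bigr),
\]
which is the desired inequality.

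I do not expect a serious obstacle here; the proof reduces to two well-known ingredients. The first is the locality of the mixed-characteristic group ring $(\mathbb{Z}/p^i)[G]$, which is what lets Nakayama convert a minimal generating set for $M$ into a basis for $M_G/pM_G$. The second is the Herbrand-type equality $|M^G|=|M_G|$, which is elementary but genuinely requires the cyclicity of $G=\Gamma/\Gamma_i$; without it one would have to invoke a less sharp bound. Care must be taken with the $i=n$ case, which needs the separate direct argument above rather than the inequality route, since for $i=n$ the group $G$ is trivial and the coinvariants-equals-invariants identity becomes tautological and too weak.
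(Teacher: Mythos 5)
Your argument is correct and follows essentially the same route as the paper: Nakayama's lemma over the local ring $(\mathbb{Z}/p^i)[\Gamma/\Gamma_i]$ identifies $\gamma_i(\mathfrak{M})$ with $\mathrm{rk}_p$ of the coinvariants, and the cyclicity of $\Gamma/\Gamma_i$ gives $\mathrm{v}_p(M_\Gamma)=\mathrm{v}_p(M^\Gamma)$, whence the bound $\mathrm{rk}_p(M_\Gamma)\le \mathrm{v}_p(M^\Gamma)$. Your write-up simply makes explicit the steps the paper compresses into ``by Nakayama's lemma'' and ``by the cyclicity of $\Gamma$,'' including the observation that $\Gamma$ acts on $H^1(\Gamma_i,\mathfrak{M})$ through $\Gamma/\Gamma_i$.
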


\begin{proof}
We have $\gamma_i(\mathfrak{M}) = \mathrm{rk}_p(H^1(\Gamma_i, \mathfrak{M})_{\Gamma})$ by Nakayama's lemma, where $H^1(\Gamma_i, \mathfrak{M})_{\Gamma}$ denotes the $\Gamma$-coinvariant. The first claim follows because $\mathrm{v}_p(H^1(\Gamma_i, \mathfrak{M})_{\Gamma})$ is equal to $\mathrm{v}_p(H^1(\Gamma_i, \mathfrak{M})^{\Gamma})$ by the cyclicity of $\Gamma$. The second claim is immediate.
\end{proof}

\subsection{Some consequences of genus theory}

In this subsection, we use genus theory to study $\kappa_i(\mathcal{E}_{L_n})$ and $\gamma_i(\mathcal{E}_{L_n})$. To begin with, we have the following observation about the Krull-Schmidt theorem.

\begin{lemma}\label{indecomposablesummands} Let $G$ be a finite $p$-group. Let $R$ be a finite commutative unitary ring whose order is a power of $p$. For a finitely generated $R[G]$-module $M$, let $\kappa(M)$ be the number of indecomposable direct summands (counted with multiplicity) in the Krull-Schmidt decomposition of $M$. Then, the following claims are valid.
\begin{enumerate}
\item[(i)] $\kappa(M) \leq \mathrm{v}_p(M^G)$.
\item[(ii)] If $N$ is a submodule of $M$, then one has $\kappa(N) \leq \mathrm{v}_p(M^G)$.
\item[(iii)] When $R= \mathbb{Z}/p$ and $G$ is a cyclic group, one has $\kappa(M) = \mathrm{v}_p(M^G)$.
\end{enumerate}
\end{lemma}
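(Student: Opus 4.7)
The plan rests on a single observation: any nonzero finitely generated $R[G]$-module $N$ is, under the hypotheses, a finite abelian group of $p$-power order (since $|R|$ is a $p$-power and $N$ is finitely generated over $R$). The $p$-group $G$ acts on the underlying finite set $N$, so the standard orbit-counting congruence $|N^G| \equiv |N| \pmod{p}$ forces $|N^G| \geq p$ whenever $N \neq 0$, i.e.\ $\mathrm{v}_p(N^G) \geq 1$.

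For (i), I will invoke the Krull-Schmidt theorem (valid because $R[G]$ is a finite unitary ring, as cited in the paper just before the statement) to write $M = \bigoplus_{i=1}^{\kappa(M)} M_i$ with each $M_i$ nonzero indecomposable. Since taking $G$-invariants commutes with direct sums,
\[
\mathrm{v}_p(M^G) \;=\; \sum_{i=1}^{\kappa(M)} \mathrm{v}_p(M_i^G) \;\geq\; \kappa(M),
\]
by the observation above applied to each $M_i$.

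Part (ii) is formal: applying (i) to $N$ yields $\kappa(N) \leq \mathrm{v}_p(N^G)$, and the inclusion $N^G \subseteq M^G$ gives $\mathrm{v}_p(N^G) \leq \mathrm{v}_p(M^G)$.

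For (iii) I will use the explicit description of $(\mathbb{Z}/p)[G]$. When $G = \langle \sigma \rangle$ is cyclic of order $p^n$, the substitution $y := \sigma - 1$ identifies the group algebra with $(\mathbb{Z}/p)[y]/(y^{p^n})$, which is a local Artinian principal ideal ring. The structure theorem for finitely generated modules over such a ring classifies the indecomposables as exactly the cyclic quotients $(\mathbb{Z}/p)[y]/(y^j)$ for $j \in \{1,\dots,p^n\}$, each of which has socle of $\mathbb{F}_p$-dimension one (spanned by the class of $y^{j-1}$). Hence $\mathrm{v}_p(M_i^G) = 1$ for every indecomposable summand, and the inequality in (i) becomes an equality.

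The argument is essentially bookkeeping; there is no real obstacle. The only points requiring a moment of care are invoking Krull-Schmidt over the possibly non-commutative finite ring $R[G]$ (standard, and already used elsewhere in the paper) and citing the classification of indecomposable modules over the uniserial ring $(\mathbb{Z}/p)[y]/(y^{p^n})$ (immediate from the PID-quotient structure theorem). Both inputs are classical.
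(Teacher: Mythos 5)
Your proposal is correct and follows essentially the same route as the paper: Krull--Schmidt decomposition plus the nonvanishing of $M_i^G$ for each indecomposable summand (your orbit-counting congruence is exactly the standard fact the paper cites) for (i), the inclusion $N^G \subseteq M^G$ for (ii), and for (iii) your uniserial-ring classification of indecomposables over $(\mathbb{Z}/p)[y]/(y^{p^n})$ is just a restatement of the Jordan normal form argument the paper invokes. No gaps.
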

\begin{proof}
Let $\bigoplus_{i \in [\kappa(M)]} M_{i}$ be the Krull-Schmidt decomposition of $M$ into indecomposable $R[G]$-modules $M_{i}$'s. Then, one has $M^{G} \cong \bigoplus_{i \in [\kappa(M)]}\, M_{i}^{G}$, and claim (i) follows from $M^G_i \neq 0$ (cf. \cite[Prop. 1.6.12]{NWS}). Claim (ii) follows immediately from claim (i), and claim (iii) follows from the Jordan normal form theorem.
\end{proof}

By Proposition \ref{Iwasawa}(i), we have the exact sequence
\begin{equation}\label{genussequence2}
0 \longrightarrow (P_{L_{n}}^{\Gamma_i} \cap I_{L_{n-i}})/P_{L_{n-i}} \longrightarrow H^{1}(\Gamma_{i},\mathcal{E}_{L_{n}}) \longrightarrow P_{L_{n}}^{\Gamma_i}/(P_{L_{n}}^{\Gamma_i} \cap I_{L_{n-i}}) \longrightarrow 0,
\end{equation} 
for every $i \in [n]$. The quotient $(P_{L_{n}}^{\Gamma_i} \cap I_{L_{n-i}})/P_{L_{n-i}}$ is equal to the capitulation kernel $\mathrm{Cap}_i$ of class groups in $L_n/L_{n-i}$. For simplicity, we will use $\Pi_i$ to denote $P_{L_{n}}^{\Gamma_{i}}/(P_{L_{n}}^{\Gamma_{i}} \cap I_{L_{n-i}})$, which is a $\mathbb{Z}[\Gamma]$-submodule of $I_{L_n}^{\Gamma_i}/I_{L_{n-i}}$.

\begin{proposition}\label{fixedpartorder}
For every $i \in [n-1]$, we have
\begin{equation*}
    \mathrm{v}_p(H^{1}(\Gamma_i,\mathcal{E}_{L_n})^{\Gamma}) \leq r_{n,0} \cdot i \cdot (2+n-i) + \delta_0 \cdot i \cdot (n-i+1) - i \cdot n + i^2.
\end{equation*}
\end{proposition}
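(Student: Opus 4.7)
The strategy is to split $H := H^1(\Gamma_i, \mathcal{E}_{L_n})$ via the genus exact sequence $(\ref{genussequence2})$ and to bound the $\Gamma$-invariants of each of the two factors separately. Since taking $\Gamma$-invariants is left-exact, we obtain
\[ \mathrm{v}_p(H^{\Gamma}) \leq \mathrm{v}_p(\mathrm{Cap}_i^{\Gamma}) + \mathrm{v}_p(\Pi_i^{\Gamma}), \]
and since $H$ is annihilated by $|\Gamma_i| = p^i$, so are $\mathrm{Cap}_i$ and $\Pi_i$.

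For $\mathrm{v}_p(\Pi_i^{\Gamma})$, I would first give an explicit $\mathbb{Z}[\Gamma]$-module description of $I_{L_n}^{\Gamma_i}/I_{L_{n-i}}$. Grouping primes of $L_{n-i}$ according to the prime of $L$ they sit above, each $\mathfrak{p} \in R_{L_n/L}$ with ramification $p^{e'_{\mathfrak{p}}}$ in $L_n/L$ contributes a single $\Gamma/\Gamma_i$-orbit whose summand is the permutation module on that orbit with coefficient group $\mathbb{Z}/p^{\min(e'_{\mathfrak{p}}, i)}$ (the latter exponent records the ramification in $L_n/L_{n-i}$, with inertia $\Gamma_{e'_{\mathfrak{p}}} \cap \Gamma_i = \Gamma_{\min(e'_{\mathfrak{p}}, i)}$). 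Taking $\Gamma$-invariants collapses each transitive orbit onto its diagonal, so
\[ \mathrm{v}_p\bigl((I_{L_n}^{\Gamma_i}/I_{L_{n-i}})^{\Gamma}\bigr) = \sum_{\mathfrak{p} \in R_{L_n/L}} \min(e'_{\mathfrak{p}}, i) \leq i \cdot r_{n,0}, \]
and the injection $\Pi_i \hookrightarrow I_{L_n}^{\Gamma_i}/I_{L_{n-i}}$ then gives $\mathrm{v}_p(\Pi_i^{\Gamma}) \leq i \cdot r_{n,0}$.

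For $\mathrm{v}_p(\mathrm{Cap}_i^{\Gamma})$, the inclusion $\mathrm{Cap}_i \hookrightarrow \mathrm{Cl}_{L_{n-i}}$ together with the exponent bound places $\mathrm{Cap}_i^{\Gamma}$ inside $\mathrm{Cl}_{L_{n-i}}[p^i]^{G_{L_{n-i}/L}}$. I then combine the elementary inequality $\mathrm{v}_p(A[p^i]) \leq i \cdot \mathrm{rk}_p(A)$ for a finite abelian $p$-group $A$ with the $p$-rank form of genus theory for the cyclic $p$-extension $L_{n-i}/L$ of degree $p^{n-i}$ (archimedean ramification is vacuous since $p$ is odd) to obtain
\[ \mathrm{v}_p(\mathrm{Cap}_i^{\Gamma}) \leq i \cdot \mathrm{rk}_p\bigl(\mathrm{Cl}_{L_{n-i}}^{G_{L_{n-i}/L}}\bigr) \leq i\bigl(\mathrm{rk}_p(\mathrm{Cl}_L) + |R_{L_{n-i}/L}|\bigr) \leq i(\delta_0 + r_{n,0}). \]

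Assembling gives $\mathrm{v}_p(H^{\Gamma}) \leq i\delta_0 + 2ir_{n,0}$, which is uniformly stronger than the stated bound: their difference equals $i(n-i)(r_{n,0} + \delta_0 - 1) \geq 0$, with the non-negativity coming from $r_{n,0} + \delta_0 \geq 1$ (otherwise $L_n/L$ would be unramified at every finite prime---and automatically at archimedean ones, since $p$ odd forbids archimedean ramification in a $p$-extension---forcing $L_n$ into the trivial $p$-Hilbert class field of $L$ and contradicting $[L_n:L] = p^n$). I expect the main obstacle to be the explicit $\Gamma$-module description of $I_{L_n}^{\Gamma_i}/I_{L_{n-i}}$ used in the second step, which requires careful bookkeeping of decomposition and inertia data at each ramified prime; the remainder of the argument is a routine application of classical genus theory and elementary $p$-group estimates.
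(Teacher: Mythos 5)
Your decomposition via the genus sequence $(\ref{genussequence2})$ and your treatment of $\Pi_i$ are sound and match the paper (the paper gets the same bound $\mathrm{v}_p(\Pi_i^{\Gamma}) \leq i\cdot r_{n,0}$ by Nakayama's lemma applied to the $\Gamma$-generators $\{\mathfrak{p}_{L_n/L_{n-i}}\}$ rather than by your explicit orbit decomposition, but both work). The reduction $\mathrm{v}_p(\mathrm{Cap}_i^{\Gamma}) \leq i\cdot \mathrm{rk}_p(\mathrm{Cl}_{L_{n-i}}^{\Gamma})$ is also fine and is exactly the paper's first step. The gap is the next inequality: you invoke a ``$p$-rank form of genus theory'' to claim $\mathrm{rk}_p\bigl(\mathrm{Cl}_{L_{n-i}}^{G_{L_{n-i}/L}}\bigr) \leq \mathrm{rk}_p(\mathrm{Cl}_L) + |R_{L_{n-i}/L}|$, but no such clean statement is available. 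Taking $G$-invariants of $0 \to P_{L_{n-i}} \to I_{L_{n-i}} \to \mathrm{Cl}_{L_{n-i}} \to 0$ and using $H^1(G, I_{L_{n-i}})=0$ gives
\begin{equation*}
0 \longrightarrow I_{L_{n-i}}^{G}/P_{L_{n-i}}^{G} \longrightarrow \mathrm{Cl}_{L_{n-i}}^{G} \longrightarrow H^1(G, P_{L_{n-i}}) \longrightarrow 0,
\end{equation*}
and while the left-hand term does have $p$-rank at most $\delta_0 + r_{n,0}$, the term $H^1(G, P_{L_{n-i}})$ does not disappear. It embeds into $H^2(G,\mathcal{O}_{L_{n-i}}^{\times}) \cong \hat{H}^0(G,\mathcal{O}_{L_{n-i}}^{\times})$, whose size is governed by the unit group of $L$ and is not controlled by $\delta_0$ and $r_{n,0}$ alone without further work. (The ambiguous class number formula controls the \emph{order} of $\mathrm{Cl}_{L_{n-i}}^{G}$ in terms of $\mathrm{v}_p(\mathrm{Cl}_L)$ and $\sum_v \mathrm{v}_p(e_v)$, not the $p$-rank in terms of $\delta_0$ and the number of ramified primes; Rosen's bound, Proposition \ref{the-Rosen}, carries a factor of $[E:E']$ for exactly this reason.)

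This missing term is precisely where the extra summand $i(n-i)(\delta_0 + r_{n,0} - 1)$ in the proposition comes from: the paper bounds $\mathrm{rk}_p(H^1(\Gamma/\Gamma_i, P_{L_{n-i}}))$ by $\mathrm{v}_p(\hat{H}^0(\Gamma/\Gamma_i,\mathcal{O}_{L_{n-i}}^{\times})) = \mathrm{v}_p(H^1(\Gamma/\Gamma_i,\mathcal{O}_{L_{n-i}}^{\times})) - (n-i)$ via the unit principal genus theorem (Herbrand quotient of units), and then bounds $\mathrm{v}_p(H^1(\Gamma/\Gamma_i,\mathcal{E}_{L_{n-i}}))$ by $(n-i)(\delta_0 + r_{n,0})$ by applying the genus sequence $(\ref{genussequence2})$ one level down. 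Your proposed bound $i\delta_0 + 2ir_{n,0}$ is strictly stronger than the stated one whenever $n-i \geq 1$ and $\delta_0 + r_{n,0} > 1$; that it came out uniformly stronger with no extra input should have been a warning sign. To repair the argument you must either prove your rank inequality for ambiguous class groups (which I do not believe holds in general for $[L_{n-i}:L] \geq p^2$) or reinstate the $H^1(G, P_{L_{n-i}})$ term and bound it as the paper does.
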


\begin{proof}
From the exact sequence ($\ref{genussequence2}$), we infer $\mathrm{v}_p(H^1(\Gamma_i, \mathcal{E}_{L_n})^{\Gamma}) \leq \mathrm{v}_p(\mathrm{Cap}^{\Gamma}_i) + \mathrm{v}_p(\Pi^{\Gamma}_i)$. The number $\mathrm{v}_p(\Pi^{\Gamma}_i)$ is bounded above by $\mathrm{v}_p((I_{L_n}^{\Gamma_i}/I_{L_{n-i}})^{\Gamma})$. For every $\mathfrak{p} \in R_{L_n/L}$, let us fix a prime $\mathfrak{p}_{L_n}$ of $L_n$ above $\mathfrak{p}$. Let $\mathfrak{p}_{L_n/L_{n-i}} \in I_{L_n}$ be the product of all the conjugates of $\mathfrak{p}_{L_n}$ over $L_{n-i}$. By a standard argument in genus theory, $I_{L_n}^{\Gamma_i}/I_{L_{n-i}}$ is generated by the classes of $\{ \mathfrak{p}_{L_n/L_{n-i}} \}_{\mathfrak{p} \in R_{L_n/L}}$ as a $\mathbb{Z}[\Gamma]$-module. Therefore, we have $\mathrm{rk}_p((I^{\Gamma_i}_{L_n}/I_{L_{n-i}})_{\Gamma}) \leq r_{n,0}$ by Nakayama's lemma. Since $I^{\Gamma_i}_{L_n}/I_{L_{n-i}}$ is annihilated by $p^i$, we deduce
\begin{equation}\label{Pi}
    \mathrm{v}_p(\Pi^{\Gamma}_i) \leq \mathrm{v}_p((I^{\Gamma_i}_{L_n}/I_{L_{n-i}})^{\Gamma}) = \mathrm{v}_p((I^{\Gamma_i}_{L_n}/I_{L_{n-i}})_{\Gamma}) \leq i \cdot r_{n,0}.
\end{equation}
On the other hand, we have
\begin{equation}\label{Cap1}
\begin{aligned}
\mathrm{v}_p(\mathrm{Cap}^{\Gamma}_i) \leq i \cdot \mathrm{rk}_p(\mathrm{Cl}^{\Gamma}_{L_{n-i}}) &  = i \cdot \mathrm{rk}_p(\mathrm{Cl}^{\Gamma/\Gamma_i}_{L_{n-i}}) \\
& \leq i \cdot \big ( \, \mathrm{rk}_p (I_{L_{n-i}}^{\Gamma/\Gamma_i}/P_{L_{n-i}}^{\Gamma/\Gamma_i}) + \mathrm{rk}_p(H^1(\Gamma/\Gamma_i, P_{L_{n-i}})) \, \big )\\
& \leq i \cdot \big ( \, \mathrm{rk}_p(I^{\Gamma/\Gamma_i}_{L_{n-i}}/P_{L}) + \mathrm{rk}_p(H^1(\Gamma/\Gamma_i, P_{L_{n-i}})) \, \big ) \\
& \leq i \cdot \big ( \, r_{n,0} + \delta_0 + \mathrm{rk}_p (H^1(\Gamma/\Gamma_i, P_{L_{n-i}})) \big ).
\end{aligned}
\end{equation}
The long exact sequence of cohomology groups associated to $1 \to \mathcal{O}_{L_{n-i}}^{\times} \to L_{n-i}^{\times} \to P_{L_{n-i}} \to 1$ yields an injective map $H^1(\Gamma/\Gamma_i, P_{L_{n-i}}) \to H^2(\Gamma/\Gamma_i, \mathcal{O}_{L_{n-i}}^{\times}) \cong \hat{H}^0(\Gamma/\Gamma_i, \mathcal{O}_{L_{n-i}}^{\times})$ by Hilbert's theorem 90. By the unit principal genus theorem (cf. \cite[Cor 2. on p. 192]{Lang}), we have
\begin{equation}\label{Cap2}
\mathrm{rk}_p(H^1(\Gamma/\Gamma_i, P_{L_{n-i}})) \leq \mathrm{v}_p(\hat{H}^0(\Gamma/\Gamma_i, \mathcal{O}_{L_{n-i}}^{\times})) = \mathrm{v}_p(H^1(\Gamma/\Gamma_i, \mathcal{O}_{L_{n-i}}^{\times})) - (n-i).
\end{equation}
By the assumption $N_{L_n/L}(\mu_{L_n}) = \mu_L$, we have the isomorphisms
\begin{equation*}
    H^1(\Gamma/\Gamma_i, \mathcal{O}_{L_{n-i}}^{\times}) \cong H^1(\Gamma/\Gamma_i, \mathbb{Z}_p \otimes_{\mathbb{Z}} \mathcal{O}_{L_{n-i}}^{\times}) \cong H^1(\Gamma/\Gamma_i, \mathcal{E}_{L_{n-i}}).
\end{equation*}
Hence, the exact sequence $(\ref{genussequence2})$ for the $\Gamma/\Gamma_i$-module $\mathcal{E}_{L_{n-i}}$ provides the inequality 
\begin{equation}\label{Cap3}
\mathrm{v}_p(H^1(\Gamma/\Gamma_i, \mathcal{O}_{L_{n-i}}^{\times})) = \mathrm{v}_p(H^1(\Gamma/\Gamma_i, \mathcal{E}_{L_{n-i}})) \leq (n-i) \cdot ( \delta_0 + r_{n,0} )
\end{equation}
The claim follows from $(\ref{Pi}), (\ref{Cap1}), (\ref{Cap2})$, and $(\ref{Cap3})$.
\end{proof}

We can derive the following corollary from Lemma \ref{nakayama}.

\begin{corollary}\label{nakayama2}
For every $i \in [n-1]$ we have
\begin{equation*}
\gamma_i(\mathcal{E}_{L_n}) \leq r_{n,0} \cdot i \cdot (2+n-i) + \delta_0 \cdot i \cdot (n-i+1) - i \cdot n + i^{2}.
\end{equation*}
\end{corollary}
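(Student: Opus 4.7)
The statement is an immediate combination of the two results just established, so the proof proposal is short. The plan is to apply Lemma \ref{nakayama} to the $\mathbb{Z}_p[\Gamma]$-lattice $\mathfrak{M} = \mathcal{E}_{L_n}$ and then feed the resulting bound into Proposition \ref{fixedpartorder}.

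First, observe that $\mathcal{E}_{L_n}$ is a $\mathbb{Z}_p[\Gamma]$-lattice (it is the pro-$p$ completion of a finitely generated torsion-free $\mathbb{Z}[\Gamma]$-module after quotienting by roots of unity, which are cohomologically trivial under our standing assumption $N_{L_n/L}\mu_{L_n}=\mu_L$). Hence Lemma \ref{nakayama} applies and yields, for each $i \in [n-1]$, the inequality
\begin{equation*}
\gamma_i(\mathcal{E}_{L_n}) \leq \mathrm{v}_p\bigl(H^1(\Gamma_i, \mathcal{E}_{L_n})^{\Gamma}\bigr).
\end{equation*}

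Next, Proposition \ref{fixedpartorder} provides the arithmetic bound
\begin{equation*}
\mathrm{v}_p\bigl(H^1(\Gamma_i, \mathcal{E}_{L_n})^{\Gamma}\bigr) \leq r_{n,0} \cdot i \cdot (2+n-i) + \delta_0 \cdot i \cdot (n-i+1) - i \cdot n + i^{2}
\end{equation*}
for every $i \in [n-1]$. Chaining the two inequalities gives the claimed estimate for $\gamma_i(\mathcal{E}_{L_n})$.

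There is no real obstacle here: all the substantive work — the genus-theoretic analysis bounding the fixed part $H^1(\Gamma_i,\mathcal{E}_{L_n})^{\Gamma}$ via the decomposition into $\mathrm{Cap}_i$ and $\Pi_i$, and the Nakayama-type identification of $\gamma_i$ with $\mathrm{rk}_p$ of the $\Gamma$-coinvariant — is already carried out in Proposition \ref{fixedpartorder} and Lemma \ref{nakayama}. The corollary is therefore just a one-line deduction, and I would present it as such without any further calculation.
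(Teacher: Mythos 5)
Your proposal is correct and matches the paper's own (implicit) argument exactly: the corollary is deduced by chaining the bound $\gamma_i(\mathcal{E}_{L_n}) \leq \mathrm{v}_p\bigl(H^1(\Gamma_i, \mathcal{E}_{L_n})^{\Gamma}\bigr)$ from Lemma \ref{nakayama} with the estimate of Proposition \ref{fixedpartorder}. Nothing further is needed.
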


\subsection{Proof of Theorem \ref{2nd}}
In order to apply Lemma \ref{counting-ref}(ii) and (iii) to $\Delta(\mathcal{E}_{L_n})$ for $n > 2$, we need the following theorem of Rosen.

\begin{proposition}(cf. \cite[Thm. 2.2]{Rosen})\label{the-Rosen}
Let $E/E'$ be a cyclic $p$-extension of number fields. Then, we have
\begin{equation*}
    \mathrm{rk}_p (\mathrm{Cl}_{E}) \leq [E:E'] \cdot \big ( \, |R_{E/E'}| + \mathrm{rk}_p(\mathrm{Cl}_{E'}) \,  \big ).
\end{equation*}
\end{proposition}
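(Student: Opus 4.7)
The plan is to combine Nakayama's lemma for the group algebra $\mathbb{F}_{p}[G]$, where $G := \mathrm{Gal}(E/E')$, with a genus-theoretic bound on the ambiguous class group. Since $G$ is a cyclic $p$-group, the augmentation ideal is the unique maximal ideal of $\mathbb{F}_{p}[G]$, so any finitely generated $\mathbb{F}_{p}[G]$-module $M$ admits a minimal system of generators of cardinality $\dim_{\mathbb{F}_{p}}(M_{G})$, each of which spans a cyclic $\mathbb{F}_{p}[G]$-submodule of dimension at most $|G|$. Applying this to $M := \mathrm{Cl}_{E}/p$, and using the standard fact that for a finite $\mathbb{F}_{p}$-vector space with a cyclic group action the endomorphism $\sigma-1$ has kernel and cokernel of equal $\mathbb{F}_{p}$-dimension, one obtains
\begin{equation*}
\mathrm{rk}_{p}(\mathrm{Cl}_{E}) \leq [E:E'] \cdot \dim_{\mathbb{F}_{p}}((\mathrm{Cl}_{E}/p)^{G}).
\end{equation*}

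I would then bound $\dim_{\mathbb{F}_{p}}((\mathrm{Cl}_{E}/p)^{G})$ via classical genus theory, in a spirit parallel to the computations in the proof of Proposition \ref{fixedpartorder}. The multiplication-by-$p$ short exact sequence on $\mathrm{Cl}_{E}$ yields
\begin{equation*}
0 \longrightarrow \mathrm{Cl}_{E}^{G}/p \longrightarrow (\mathrm{Cl}_{E}/p)^{G} \longrightarrow H^{1}(G,\mathrm{Cl}_{E})[p] \longrightarrow 0,
\end{equation*}
so it suffices to control $\mathrm{rk}_{p}(\mathrm{Cl}_{E}^{G})$ and $\mathrm{rk}_{p}(H^{1}(G,\mathrm{Cl}_{E}))$. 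For the first, the familiar genus-theoretic exact sequence
\begin{equation*}
\mathrm{Cl}_{E'} \longrightarrow \mathrm{Cl}_{E}^{G} \longrightarrow I_{E}^{G}/(P_{E}^{G} \cdot I_{E'}) \longrightarrow 0,
\end{equation*}
together with the observation that $I_{E}^{G}/I_{E'}$ is generated as a $\mathbb{Z}[G]$-module by the classes of the ambiguous primes lying above $R_{E/E'}$, yields $\mathrm{rk}_{p}(\mathrm{Cl}_{E}^{G}) \leq \mathrm{rk}_{p}(\mathrm{Cl}_{E'}) + |R_{E/E'}|$. For the second, triviality of the Herbrand quotient of the finite module $\mathrm{Cl}_{E}$ over the cyclic group $G$ gives $|H^{1}(G, \mathrm{Cl}_{E})| = |\hat{H}^{0}(G, \mathrm{Cl}_{E})| \leq |\mathrm{Cl}_{E}^{G}|$, so the same bound applies.

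The main obstacle I anticipate is arranging the bookkeeping so that the final constant is exactly $[E:E']$, rather than the $2[E:E']$ that a naive sum of the two contributions above produces. One way around this is to argue inductively along a tower of degree-$p$ subextensions $E = E_{m} \supset E_{m-1} \supset \cdots \supset E_{0} = E'$, reducing the claim to the case $[E:E'] = p$, where the comparison between $(\mathrm{Cl}_{E}/p)^{G}$ and $\mathrm{Cl}_{E}^{G}/p$ is cleaner and the inductive step requires only standard ramification accounting for $p$-cyclic towers. A second, more direct route is to invoke Chevalley's ambiguous class number formula, which computes $|\mathrm{Cl}_{E}^{G}|$ exactly in terms of $|\mathrm{Cl}_{E'}|$, the ramification indices at primes of $R_{E/E'}$, and a global unit index, and thereby allows the redundant factor of $2$ to be absorbed into the ramification term.
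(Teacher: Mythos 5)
The paper offers no proof of this proposition --- it is quoted directly from Rosen's Theorem 2.2 --- so there is no in-paper argument to compare against; I can only assess your sketch on its own terms. Your opening reduction is correct and is surely the right first move: $\mathbb{F}_p[G]$ is local with the augmentation ideal as maximal ideal, so $\mathrm{rk}_p(\mathrm{Cl}_E) \leq |G| \cdot \dim_{\mathbb{F}_p}((\mathrm{Cl}_E/p)_G) = |G| \cdot \dim_{\mathbb{F}_p}((\mathrm{Cl}_E/p)^G)$. The gaps are all in the second half. First, your ``familiar genus-theoretic exact sequence'' $\mathrm{Cl}_{E'} \to \mathrm{Cl}_E^G \to I_E^G/(P_E^G \cdot I_{E'}) \to 0$ is not exact: the image of $I_E^G$ in $\mathrm{Cl}_E^G$ is only the subgroup of \emph{strongly} ambiguous classes, and the quotient of $\mathrm{Cl}_E^G$ by it is $H^1(G, P_E)$, which embeds via Hilbert 90 into $\hat{H}^0(G, \mathcal{O}_E^{\times})$ and is not controlled by $|R_{E/E'}| + \mathrm{rk}_p(\mathrm{Cl}_{E'})$ without substantial extra work; compare the paper's proof of Proposition \ref{fixedpartorder}, where precisely this term has to be handled separately via the unit principal genus theorem. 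Second, the Herbrand-quotient step confuses order with rank: $|H^1(G,\mathrm{Cl}_E)| = |\hat{H}^0(G,\mathrm{Cl}_E)|$ lets you bound $\mathrm{v}_p(H^1(G,\mathrm{Cl}_E))$ by $\mathrm{v}_p(\mathrm{Cl}_E^G)$, but not $\mathrm{rk}_p(H^1(G,\mathrm{Cl}_E))$ by $\mathrm{rk}_p(\mathrm{Cl}_E^G)$, and $\mathrm{v}_p(\mathrm{Cl}_E^G)$ can vastly exceed the $p$-rank; so ``the same bound applies'' is a non sequitur. Third, even granting both bounds you reach only $2[E:E']$, and neither proposed repair is carried out --- iterating over degree-$p$ layers in fact accumulates an extra factor $p/(p-1)$ on the ramification term rather than removing the $2$.

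The structural mistake is choosing invariants over coinvariants. Working with $(\mathrm{Cl}_E/p)_G = ((\mathrm{Cl}_E)_G)/p$ (right-exactness of coinvariants: no $H^1$ correction term appears) reduces everything to bounding $\mathrm{rk}_p((\mathrm{Cl}_E)_G)$, and $(\mathrm{Cl}_E)_G \cong \mathrm{Gal}(M/E)$ for $M$ the genus field, i.e.\ the maximal subfield of the Hilbert class field of $E$ that is abelian over $E'$. Each prime of $R_{E/E'}$ contributes one inertia subgroup of $\mathrm{Gal}(M/E')$, which is cyclic because it meets $\mathrm{Gal}(M/E)$ trivially and hence injects into $G$, and the fixed field of all of them is unramified abelian over $E'$, hence governed by $\mathrm{Cl}_{E'}$ (infinite places contribute only $2$-torsion, harmless as $p$ is odd). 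This gives $\mathrm{rk}_p((\mathrm{Cl}_E)_G) \leq |R_{E/E'}| + \mathrm{rk}_p(\mathrm{Cl}_{E'})$ in one step, with no ambiguous class group, no unit term, and no factor of $2$; combined with your first inequality it yields the proposition with the stated constant.
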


\begin{lemma}\label{res-cores-3}
Let $L_3/L$ be a cyclic extension of degree $p^3$ with $N_{L_3/L}(\mu_{L_3})=\mu_L$. The following are true.
\begin{itemize}
\item[(i)] The number of possible restriction maps $H^1(\Gamma_2, \mathcal{E}_{L_3}) \to H^1(\Gamma_1, \mathcal{E}_{L_3})$ is bounded above by 
\begin{equation*}
p^{p \cdot (5r_{3,0}+2\delta_0)(6r_{3,0}+4\delta_0)}.
\end{equation*}
\item[(ii)] The number of possible corestriction maps $H^1(\Gamma_1, \mathcal{E}_{L_3}) \to H^1(\Gamma_2, \mathcal{E}_{L_3})$ is bounded above by 
\begin{equation*}
    p^{p \cdot (2r_{3,0}+\delta_0)(4r_{3,0}+3\delta_0)}.
\end{equation*}
\end{itemize}
\end{lemma}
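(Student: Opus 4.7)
The plan is to derive both statements from parts (ii) and (iii) of Lemma \ref{counting-ref} applied to $\mathcal{E}_{L_3}$. The first step is to invoke Corollary \ref{nakayama2}, which directly gives $\gamma_2(\mathcal{E}_{L_3}) \leq 6r_{3,0}+4\delta_0$ and $\gamma_1(\mathcal{E}_{L_3}) \leq 4r_{3,0}+3\delta_0$, accounting for the second factors in the two asserted exponents. It then remains to bound $|H^1(\Gamma_1, \mathcal{E}_{L_3})^{\Gamma_2}|$ for claim (i) and $|H^1(\Gamma_2, \mathcal{E}_{L_3})[p]|$ for claim (ii).

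For claim (i), the key move is to interpret the $\Gamma_2$-invariants through the cyclic sub-extension $L_3/L_1$, viewed as a degree-$p^2$ extension with Galois group $\Gamma_2$. Since cohomological triviality of $\mu_{L_3}$ as a $\Gamma$-module is inherited by any subgroup, the hypothesis $N_{L_3/L_1}\mu_{L_3} = \mu_{L_1}$ is automatic, and Proposition \ref{fixedpartorder} applied to $L_3/L_1$ with $(n,i)=(2,1)$ yields $\mathrm{v}_p(H^1(\Gamma_1, \mathcal{E}_{L_3})^{\Gamma_2}) \leq 3r_{3,1}+2\delta_1-1$. The elementary inequality $r_{3,1} \leq p \cdot r_{3,0}$ (each prime of $L$ ramified in $L_3/L$ has at most $p=[L_1:L]$ primes of $L_1$ above it) combined with Rosen's bound (Proposition \ref{the-Rosen}) applied to $L_1/L$, $\delta_1 \leq p(r_{3,0}+\delta_0)$, then translates everything back to invariants of $L_3/L$. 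Because $H^1(\Gamma_1, \mathcal{E}_{L_3})$ has exponent $p$, one has $|H^1(\Gamma_1, \mathcal{E}_{L_3})^{\Gamma_2}| = p^{\mathrm{v}_p(H^1(\Gamma_1, \mathcal{E}_{L_3})^{\Gamma_2})}$, and the combination delivers the first factor $p(5r_{3,0}+2\delta_0)$.

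For claim (ii), the identity $|A[p]| = p^{\mathrm{rk}_p(A)}$ for any finite abelian $p$-group $A$ reduces the task to bounding $\mathrm{rk}_p(H^1(\Gamma_2, \mathcal{E}_{L_3}))$. I will use the genus exact sequence (\ref{genussequence2}) at $i=2$: the capitulation kernel $\mathrm{Cap}_2$ embeds in $\mathrm{Cl}_L$, giving $\mathrm{rk}_p(\mathrm{Cap}_2) \leq \delta_0$, while $\Pi_2 \hookrightarrow I_{L_3}^{\Gamma_2}/I_L$ is generated over $\mathbb{Z}[\Gamma/\Gamma_2]$ by the $r_{3,0}$ classes of primes of $L$ ramified in $L_3/L$, as recalled in the proof of Proposition \ref{fixedpartorder}. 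Since $\mathbb{F}_p[\Gamma/\Gamma_2]$ has $\mathbb{F}_p$-dimension $p$, this forces $\mathrm{rk}_p(\Pi_2) \leq p \cdot r_{3,0}$, whence $\mathrm{rk}_p(H^1(\Gamma_2, \mathcal{E}_{L_3})) \leq \delta_0 + p \cdot r_{3,0} \leq p(2r_{3,0}+\delta_0)$, as required.

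The hardest step is the first factor in (i): a naive Jordan-form estimate $\dim M^{\Gamma_2/\Gamma_1} \leq p \cdot \dim M^{\Gamma/\Gamma_1}$ on the $\mathbb{F}_p[\Gamma/\Gamma_1]$-module $M = H^1(\Gamma_1, \mathcal{E}_{L_3})$, combined with Proposition \ref{fixedpartorder} applied to $L_3/L$ directly, yields only a bound of shape $p(4r_{3,0}+3\delta_0)$, which is incomparable with the asserted $p(5r_{3,0}+2\delta_0)$. The decisive observation is that one should instead apply Proposition \ref{fixedpartorder} to the internal sub-extension $L_3/L_1$, and absorb the cost of changing base field into a single invocation of Rosen's theorem.
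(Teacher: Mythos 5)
Your argument for part (i) is exactly the paper's: you bound the exponent $\gamma_2(\mathcal{E}_{L_3})$ by $\mathrm{v}_p(H^1(\Gamma_2,\mathcal{E}_{L_3})^{\Gamma}) \le 6r_{3,0}+4\delta_0$ via Lemma \ref{nakayama} and Proposition \ref{fixedpartorder}, and you obtain the base $|H^1(\Gamma_1,\mathcal{E}_{L_3})^{\Gamma_2}|$ by applying Proposition \ref{fixedpartorder} to the sub-extension $L_3/L_1$ (legitimate, as you note, since cohomological triviality of $\mu_{L_3}$ passes to subgroups) and then converting $3r_{3,1}+2\delta_1$ into $p(5r_{3,0}+2\delta_0)$ using $r_{3,1}\le p\,r_{3,0}$ and Rosen's theorem. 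This is precisely the computation in the paper, including your closing observation that the estimate must be run over $L_3/L_1$ rather than $L_3/L$.

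In part (ii) there is a slip. With $n=3$ and $i=2$ the sequence $(\ref{genussequence2})$ reads $0 \to (P_{L_3}^{\Gamma_2}\cap I_{L_1})/P_{L_1} \to H^1(\Gamma_2,\mathcal{E}_{L_3}) \to \Pi_2 \to 0$, so $\mathrm{Cap}_2$ is the capitulation kernel of $\mathrm{Cl}_{L_1}\to\mathrm{Cl}_{L_3}$ and embeds in $\mathrm{Cl}_{L_1}$, not in $\mathrm{Cl}_L$; likewise $\Pi_2$ sits inside $I_{L_3}^{\Gamma_2}/I_{L_1}$, not $I_{L_3}^{\Gamma_2}/I_{L}$. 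Your claimed bound $\mathrm{rk}_p(\mathrm{Cap}_2)\le\delta_0$ is therefore unjustified; the correct bound is $\mathrm{rk}_p(\mathrm{Cap}_2)\le\delta_1$. The damage is contained: a second application of Proposition \ref{the-Rosen} gives $\delta_1\le p(r_{3,0}+\delta_0)$, and together with $\mathrm{rk}_p(\Pi_2)\le r_{3,1}\le p\,r_{3,0}$ one recovers exactly the paper's estimate $\mathrm{rk}_p(H^1(\Gamma_2,\mathcal{E}_{L_3}))\le \delta_1+r_{3,1}\le p(2r_{3,0}+\delta_0)$, which is all that claim (ii) requires. So the strategy and the final bounds coincide with the paper's; only this one intermediate inequality needs repairing.
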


\begin{proof}
By Lemma \ref{counting-ref}(ii) and Corollary \ref{nakayama2}, the number of possible restriction maps is bounded above by
\begin{equation*}
    |H^1(\Gamma_1, \mathcal{E}_{L_3})^{\Gamma_2}|^{\mathrm{v}_p(H^1(\Gamma_2, \mathcal{E}_{L_3})^{\Gamma})}.
\end{equation*}
We have $\mathrm{v}_p(H^1(\Gamma_2, \mathcal{E}_{L_3})^{\Gamma}) \leq 6r_{3,0} + 4\delta_0$ by Proposition \ref{fixedpartorder} and $\mathrm{v}_p(H^1(\Gamma_1, \mathcal{E}_{L_3})^{\Gamma_2}) \leq 3r_{3,1} + 2 \delta_1 \leq p(5r_{3,0}+2\delta_0)$ by Proposition \ref{the-Rosen}. Hence, claim (i) follows.

By Lemma \ref{counting-ref}(iii) and Corollary \ref{nakayama2}, the number of possible corestriction maps is at most $$|H^1(\Gamma_2, \mathcal{E}_{L_3})[p]|^{\mathrm{v}_p(H^1(\Gamma_1, \mathcal{E}_{L_3})^{\Gamma})}= p^{\mathrm{rk}_p(H^1(\Gamma_2, \mathcal{E}_{L_3})) \cdot \mathrm{v}_p(H^1(\Gamma_1, \mathcal{E}_{L_3})^{\Gamma})}.$$
The exact sequence $(\ref{genussequence2})$ for the $\mathbb{Z}_p[\Gamma_2]$-module $\mathcal{E}_{L_3}$ provides us $\mathrm{rk}_p(H^1(\Gamma_2, \mathcal{E}_{L_3})) \leq \delta_1 + r_{3,1} \leq p \cdot (\delta_0 + 2r_{3,0})$. By Proposition \ref{fixedpartorder}, we have $\mathrm{v}_p(H^1(\Gamma_1, \mathcal{E}_{L_3})^{\Gamma}) \leq 4r_{3,0} + 3\delta_0$.
\end{proof}

The generalization of Lemma \ref{res-cores-3} to all $n \geq 4$ and $i \in [n]$ is immediate.
 Hence, the difficulty in obtaining an explicit upper bound on the number of possible $\Delta(\mathcal{E}_{L_n})$ for general $n \in \mathbb{N}$ involves $\{s_i\}_{i \in [n]}$ of Lemma \ref{counting-ref}. This requires the classification of $(\mathbb{Z}/p^i)[(\mathbb{Z}/p^j)]$-modules for various $i,j \in \mathbb{N}$. We can use Remark \ref{si} to study the case $n=2$. We remark that $(\mathbb{Z}/p^2)[(\mathbb{Z}/p)]$-modules are classified in the work of Szekeres \cite{p2-1} (also see \cite{p2-3, p2-2}), which is technically involved.

Instead, we will use the following lemma obtained with an ad-hoc method to study the case $n=3$. For every $i \in [p]$, let $Y_i$ be the indecomposable $(\mathbb{Z}/p)[\Gamma/\Gamma_2]$-module with cardinality $p^i$. We will consider $\{Y_i\}_{i \in [p]}$ also as $(\mathbb{Z}/p^2)[\Gamma/\Gamma_2]$-modules. Under a fixed identification $(\mathbb{Z}/p^2)[\Gamma/\Gamma_2] \cong (\mathbb{Z}/p^2)[X]/(X^p-1)$, $Y_i$ is isomorphic to $(\mathbb{Z}/p)[X]/((X-1)^i)$ as $(\mathbb{Z}/p^2)[X]/(X^p-1)$-modules.

\begin{lemma}\label{ad-hoc}
Let $L_3/L$ be a cyclic extension of number fields of degree $p^3$ with $N_{L_3/L}(\mu_{L_3})=\mu_L$. Then, we have 
\begin{equation*}
n_2(\mathcal{E}_{L_3}) \leq  p^{p \cdot (6r_{3,0} + 4 \delta_0)^2} \cdot \binom{6r_{3,0}+4\delta_0+p}{p}^2.
\end{equation*}
\end{lemma}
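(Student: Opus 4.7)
Set $H := H^1(\Gamma_2, \mathcal{E}_{L_3})$ and $R := (\mathbb{Z}/p^2)[\Gamma/\Gamma_2]$; then $H$ is an $R$-module annihilated by $p^2$. My plan is to split $H$ via the tautological exact sequence $0 \to pH \to H \to H/pH \to 0$ of $R$-modules, in which both $pH$ and $H/pH$ are annihilated by $p$ and so are modules over $\bar R := R/pR = (\mathbb{Z}/p)[\Gamma/\Gamma_2] \cong \mathbb{F}_p[X]/((X-1)^p)$. Since $\bar R$ is a local principal ideal ring, Jordan normal form identifies its indecomposable modules with the $p$-element family $\{Y_i\}_{i \in [p]}$. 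I then count (a) the $\bar R$-isomorphism classes of $pH$ and $H/pH$ separately, and (b) for each such pair, the $R$-structures on $H$ realising it.

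For (a), Lemma \ref{indecomposablesummands}(ii) applied to the submodule $pH \subseteq H$ gives $\kappa(pH) \leq \mathrm{v}_p(H^{\Gamma/\Gamma_2}) = \mathrm{v}_p(H^{\Gamma})$, using that $\Gamma_2$ acts trivially on $H$. For the quotient, Lemma \ref{indecomposablesummands}(iii) gives $\kappa(H/pH) = \mathrm{v}_p((H/pH)^{\Gamma/\Gamma_2})$, and the $(\Gamma/\Gamma_2)$-cohomology long exact sequence of $0 \to pH \to H \to H/pH \to 0$ yields
$$\mathrm{v}_p((H/pH)^{\Gamma/\Gamma_2}) \,\leq\, \mathrm{v}_p(H^{\Gamma/\Gamma_2}) - \mathrm{v}_p((pH)^{\Gamma/\Gamma_2}) + \mathrm{v}_p(H^1(\Gamma/\Gamma_2, pH)).$$
A direct check on each summand of $pH$ shows $H^1(\Gamma/\Gamma_2, Y_i) \cong \mathbb{F}_p$ for $i<p$ and vanishes for $i=p$, so the last term is bounded by the number of non-free summands of $pH$, hence by $\kappa(pH) = \mathrm{v}_p((pH)^{\Gamma/\Gamma_2})$. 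The two terms cancel, giving $\kappa(H/pH) \leq \mathrm{v}_p(H^\Gamma)$. Proposition \ref{fixedpartorder} (for $n=3$, $i=2$) bounds $\mathrm{v}_p(H^\Gamma)$ by $6r_{3,0}+4\delta_0$, and the combinations-with-repetitions argument underlying Lemma \ref{counting-ref}(i) then bounds the number of $\bar R$-isomorphism classes of each of $pH$ and $H/pH$ by $\binom{6r_{3,0}+4\delta_0+p}{p}$, contributing the factor $\binom{\cdot}{\cdot}^2$.

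For (b), with $A := pH$ and $B := H/pH$ fixed, the number of $R$-isomorphism classes of $H$ fitting into an extension of $B$ by $A$ is bounded by $|\mathrm{Ext}^1_R(B, A)|$. Since $pR \cong \bar R$ as $R$-modules, the sequence $0 \to pR \to R \to \bar R \to 0$ yields inductively $\mathrm{Ext}^j_R(\bar R, A) \cong A$ for all $j \geq 0$, so the Grothendieck change-of-rings spectral sequence $E_2^{i,j} = \mathrm{Ext}^i_{\bar R}(B, \mathrm{Ext}^j_R(\bar R, A)) \Rightarrow \mathrm{Ext}^{i+j}_R(B, A)$ gives
$$|\mathrm{Ext}^1_R(B, A)| \leq |\mathrm{Hom}_{\bar R}(B, A)| \cdot |\mathrm{Ext}^1_{\bar R}(B, A)|.$$
The two-periodic $\bar R$-resolution $\cdots \to \bar R \xrightarrow{(X-1)^{p-a}} \bar R \xrightarrow{(X-1)^a} \bar R \to Y_a \to 0$ yields the standard formulas $|\mathrm{Hom}_{\bar R}(Y_a, Y_b)| = p^{\min(a,b)}$ and $|\mathrm{Ext}^1_{\bar R}(Y_a, Y_b)| = p^{\min(a,b,p-a,p-b)}$. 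The elementary inequality $\min(a,b) + \min(p-a, p-b) \leq \max(a,b) + (p-\max(a,b)) = p$, hence $\min(a,b) + \min(a,b,p-a,p-b) \leq p$ for all $a, b \in [p]$, then bounds the product $|\mathrm{Hom}_{\bar R}(B, A)| \cdot |\mathrm{Ext}^1_{\bar R}(B, A)|$ by $p^{p \cdot \kappa(A) \kappa(B)} \leq p^{p(6r_{3,0}+4\delta_0)^2}$ after summing the contributions of the pairs of indecomposable summands $Y_{a_j} \subseteq A$, $Y_{b_k} \subseteq B$. Multiplying the estimates from (a) and (b) yields the claim.

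The main obstacle is extracting the sharp exponent $p \cdot (6r_{3,0}+4\delta_0)^2$ in (b): the crude bounds $|\mathrm{Hom}_{\bar R}(Y_a, Y_b)|, |\mathrm{Ext}^1_{\bar R}(Y_a, Y_b)| \leq p^p$ taken individually would produce $p^{2p \cdot \kappa(A) \kappa(B)}$, and the saving hinges on the min-identity above, which pairs the Hom and Ext contributions summand-by-summand.
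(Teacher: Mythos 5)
Your proposal is correct and follows essentially the same route as the paper: the extension $0 \to pH \to H \to H/pH \to 0$, the bound by $|\mathrm{Ext}^1_{(\mathbb{Z}/p^2)[\Gamma/\Gamma_2]}(H/pH, pH)|$, the base-change spectral sequence reducing to $\mathrm{Hom}$ and $\mathrm{Ext}^1$ over $(\mathbb{Z}/p)[\Gamma/\Gamma_2]$ between the $Y_i$, and the pairing $\min(a,b)+\min(a,b,p-a,p-b)\leq p$ (your symmetric exponent agrees with the paper's $\alpha(i,j)$). The only cosmetic differences are that you bound $\kappa(H/pH)$ via the long exact sequence in $\Gamma/\Gamma_2$-cohomology rather than via the surjection $H \to H/pH \to pH$, and you compute $\mathrm{Ext}^j_R(\bar R,A)\cong A$ directly instead of via the periodic resolution of $Y_p$.
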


\begin{proof}
By the short exact sequence $0 \to pH^{1}(\Gamma_2, \mathcal{E}_{L_3}) \to H^1(\Gamma_2, \mathcal{E}_{L_3}) \to H^1(\Gamma_2, \mathcal{E}_{L_3})/p  \to 0$, the number of possible $(\mathbb{Z}/p^2)[\Gamma/\Gamma_2]$-module structures of $H^1(\Gamma_2, \mathcal{E}_{L_3})$ is bounded above by 
\begin{equation*}
| \, \mathrm{Ext}^1_{(\mathbb{Z}/p^2)[\Gamma/\Gamma_2]}(\,H^1(\Gamma_2, \mathcal{E}_{L_3})/p  \, , pH^1(\Gamma_2, \mathcal{E}_{L_3}) \, ) \, |
\end{equation*}
(cf. \cite[Thm. 3.4.3]{Weibel}). Since both $pH^1(\Gamma_2, \mathcal{E}_{L_3})$ and $H^1(\Gamma_2, \mathcal{E}_{L_3})/p$ have exponent $p$, they are direct sums of $(\mathbb{Z}/p^2)[\Gamma/\Gamma_2]$-modules of the form $\{Y_i\}_{i \in [p]}$. By the surjectivity of the maps
\begin{equation*}
\begin{tikzcd}
{H^1(\Gamma_2, \mathcal{E}_{L_3})} \arrow[rr, "\text{projection}"] &  & {H^1(\Gamma_2, \mathcal{E}_{L_3})/p} \arrow[rr, "p"] &  & {{p H^1(\Gamma_2, \mathcal{E}_{L_3})}}
\end{tikzcd}
\end{equation*}
and Lemma \ref{indecomposablesummands}(iii), the numbers of indecomposable direct summands (counted with multiplicity) in the Krull-Schmidt decompositions of $H^1(\Gamma_2, \mathcal{E}_{L_3})/p$ and $pH^1(\Gamma_2, \mathcal{E}_{L_3})$ are bounded above by $\mathrm{v}_p(H^1(\Gamma_2, \mathcal{E}_{L_3})^{\Gamma})$, which is bounded above by $6r_{3,0}+4\delta_0$ by Corollary \ref{nakayama2}.
By the isomorphism
\begin{equation*}
    \mathrm{Ext}^1_{(\mathbb{Z}/p^2)[\Gamma/\Gamma_2]}(\, \bigoplus_{i \in [p]} Y^{a_i}_i \, , \, \bigoplus_{j \in [p]} Y^{b_j}_j \,) \cong \bigoplus_{i,j \in [p]} \, \mathrm{Ext}^1_{(\mathbb{Z}/p^2)[\Gamma/\Gamma_2]}( \, Y_i,Y_j \, )^{a_i b_j},
\end{equation*}
 the lemma follows if we show $|\mathrm{Ext}^1_{(\mathbb{Z}/p^2)[\Gamma/\Gamma_2]}(Y_i,Y_j)| \leq p^p$ for all $i,j \in [p]$. By the base-change spectral sequence for Ext (cf. \cite[Exer. 5.6.3]{Weibel})
\begin{equation}\label{spectral}
    \mathrm{Ext}^{m}_{(\mathbb{Z}/p)[\Gamma/\Gamma_2]}(Y_{i}, \mathrm{Ext}^{n}_{(\mathbb{Z}/p^{2})[\Gamma/\Gamma_2]}(Y_{p}, Y_{j})) \Longrightarrow \mathrm{Ext}^{m+n}_{(\mathbb{Z}/p^{2})[\Gamma/\Gamma_2]}(Y_{i},Y_{j})
\end{equation}
associated with the ring homomorphism $(\mathbb{Z}/p^2)[\Gamma/\Gamma_2] \to (\mathbb{Z}/p)[\Gamma/\Gamma_2]$,  $|\mathrm{Ext}^1_{(\mathbb{Z}/p^2)[\Gamma/\Gamma_2]}(Y_i,Y_j)|$ is bounded above by the product
\begin{equation}\label{product}
      |\mathrm{Hom}_{(\mathbb{Z}/p)[\Gamma/\Gamma_2]}(Y_i, \mathrm{Ext}^1_{(\mathbb{Z}/p^2)[\Gamma/\Gamma_2]}(Y_p,Y_j))| \times |\mathrm{Ext}^1_{(\mathbb{Z}/p)[\Gamma/\Gamma_2]}(Y_i, \mathrm{Hom}_{(\mathbb{Z}/p^2)[\Gamma/\Gamma_2]}(Y_p, Y_j))|.
\end{equation}

We can check that $\mathrm{Ext}^1_{(\mathbb{Z}/p^2)[\Gamma/\Gamma_2]}(Y_p,Y_j)$ is isomorphic to $Y_j$ as $(\mathbb{Z}/p)[\Gamma/\Gamma_2]$-modules by using the projective resolution 
\begin{equation*}
\begin{tikzcd}
\cdots \arrow[r, "p"] & {(\mathbb{Z}/p^2)[\Gamma/\Gamma_2]} \arrow[rr, "p"] &  & {(\mathbb{Z}/p^2)[\Gamma/\Gamma_2]} \arrow[rr, "p"] &  & {(\mathbb{Z}/p^2)[\Gamma/\Gamma_2]} \arrow[r] & Y_p \arrow[r] & 0.
\end{tikzcd}
\end{equation*}
Therefore, the first term in $(\ref{product})$ is equal to $|\mathrm{Hom}_{(\mathbb{Z}/p)[\Gamma/\Gamma_2]}(Y_i,Y_j)| = p^{\mathrm{min}\{i,j\}}$.

On the other hand, the $(\mathbb{Z}/p)[\Gamma/\Gamma_2]$-module $\mathrm{Hom}_{(\mathbb{Z}/p^2)[\Gamma/\Gamma_2]}(Y_p, Y_j)$ arising as $\mathrm{Ext}_{(\mathbb{Z}/p^2)[\Gamma/\Gamma_2]}^0(Y_p, Y_j)$ in $(\ref{spectral})$ is isomorphic to $Y_j$. Therefore, the second term in $(\ref{product})$ is equal to $|\mathrm{Ext}^1_{(\mathbb{Z}/p)[\Gamma/\Gamma_2]}(Y_i,Y_j)|$. We can check that this is equal to $p^{\alpha(i,j)}$ for $\alpha(i,j):= \mathrm{min}\{p-i , j\} - \mathrm{max}\{j-i , 0\}$ by using the projective resolution
\begin{equation*}
    \begin{tikzcd}
\cdots \arrow[rr, "(X-1)^i"] &  & Y_p \arrow[rr, "(X-1)^{p-i}"] &  & Y_p \arrow[rr, "(X-1)^i"] &  & Y_p \arrow[rr] &  & Y_i \arrow[r] & 0.
\end{tikzcd}
\end{equation*}
The claim follows because $\alpha(i,j) + \mathrm{min}\{i,j\} \leq (p-i) - \mathrm{max}\{ j-i, 0 \} + i \leq p.$
\end{proof}

Before giving the proof of Theorem \ref{2nd}, we note that it is easy to check that
\begin{equation}\label{ine-binom}
\binom{a}{b} \leq 2^a \quad \text{for all} \, a \geq b \, \text{in} \, \mathbb{N} \quad \text{and} \quad c+d \leq d^c \quad \text{for all $c,d \in \mathbb{N}$ such that $c \geq 2$ and $d \geq 3$.}
\end{equation}

\begin{proof}[\textbf{Proof of Theorem \ref{2nd}}]
Let us first set $n=2$ and prove claim (i). We have $\mathrm{v}_p(H^1(\Gamma_1, \mathcal{E}_{L_2})^{\Gamma}) \leq 3r_{2,0} + 2\delta_0$ by Proposition \ref{fixedpartorder}, and $\mathrm{rk}_p(H^1(\Gamma, \mathcal{E}_{L_2})) \leq \delta_0 + r_{2,0}$ by the exact sequence $(\ref{genussequence2})$. By Lemma \ref{counting-ref}(iv), Remark \ref{si}, and Corollary \ref{nakayama2}, the number of possible $\Delta(\mathcal{E}_{L_2})$ is bounded above by 
\begin{equation*}
 p^{2 \cdot (r_{2,0}+\delta_0)(3r_{2,0}+2\delta_0)} \cdot \binom{3r_{2,0}+2\delta_0+p}{p} \cdot \binom{r_{2,0}+\delta_0+2}{2}.   
\end{equation*}
We can check $\binom{r_{2,0}+\delta_0+2}{2} \leq p^{r_{2,0}+\delta_0}$ by using the inequality $\delta_0 + r_{2,0} \geq 1$. Moreover, by using $(\ref{ine-binom})$ we can deduce 
\begin{equation*}
    \binom{3r_{2,0}+2\delta_0+p}{p} = \binom{3r_{2,0}+2\delta_0+p}{3r_{2,0}+2\delta_0} \leq (3r_{2,0}+2\delta_0+p)^{3r_{2,0}+2\delta_0} \leq p^{(3r_{2,0}+2\delta_0)^2},
\end{equation*}
which leads to the conclusion of the claim.
\vskip 5pt
Now, we consider the case $n=3$. By Lemma \ref{res-cores-3}, we can check that the product of the second and the third term in Lemma \ref{counting-ref}(iv) for the case $\mathfrak{M} = \mathcal{E}_{L_3}$ is bounded above by $$p^{p \cdot (\alpha' \cdot \delta_0^2 + \beta' \cdot \delta_0 \cdot r_{3,0} + \gamma' \cdot r_{3,0}^2)}$$ for some $\alpha', \beta', \gamma' > 0$ that are independent of $L_3/L$. By Remark \ref{si} and Lemma \ref{ad-hoc}, the product $n_1(\mathcal{E}_{L_3}) \cdot n_2(\mathcal{E}_{L_3}) \cdot n_3(\mathcal{E}_{L_3})$ is bounded above by
\begin{equation}\label{combination}
p^{p \cdot (6r_{3,0}+4\delta_0)^2} \cdot \binom{r_{3,0}+ \delta_0+3}{3} \cdot \binom{6r_{3,0}+4\delta_0+p}{p}^2 \cdot \binom{4r_{3,0}+3\delta_0+p^2}{p^2}.
\end{equation}
Claim (ii) follows by applying the first inequality in $(\ref{ine-binom})$ to the first two combinations in $(\ref{combination})$, and the second inequality in $(\ref{ine-binom})$ to the rightmost combination in $(\ref{combination})$ to obtain the inequality
\begin{equation*}
    \binom{4r_{3,0}+3\delta_0+p^2}{p^2} = \binom{4r_{3,0}+3\delta_0+p^2}{4r_{3,0}+3\delta_0} \leq (4r_{3,0}+3\delta_0+p^2)^{4r_{3,0}+3\delta_0} \leq p^{2 \cdot (4r_{3,0}+3\delta_0)^2}.
\end{equation*}
\end{proof}

\begin{remark}\label{p2}
We can also obtain an upper bound for the number of possible $\mathbb{Z}_p[G_{L_2/L}]$-structures of $\mathcal{E}_{L_2}$ by using the complete classification of indecomposable $\mathbb{Z}_p[(\mathbb{Z}/p^2)]$-lattices (\cite{BermanGudivok, HellerReiner1}). If we set $\Psi : = \mathrm{rk}_p(\mathrm{Cl}_S(L_2(\zeta_p))) + |S_{L_2(\zeta_p),f}|$, then $c(G_{L_2/L}, \mathcal{E}_{L_2})$ is bounded above by $3p^6 \cdot \Psi$ by Theorem \ref{Burnsordinary}. For the group $\Gamma = \mathbb{Z}/p^2$, the $4p-3$ indecomposable $\mathbb{Z}_p[\Gamma]$-lattices $M$ other than $\mathbb{Z}_p$, $\mathbb{Z}_p[\zeta_p]$, and $\mathbb{Z}_p[\Gamma/\Gamma_1]$ (cf. \cite[p. 88]{HellerReiner1}) have rational representations $\mathbb{Q}_p \otimes_{\mathbb{Z}_p} M$ that are similar to $\mathbb{Q}_p[\Gamma]$. Moreover, the $\mathbb{Z}_p[\Gamma/\Gamma_1]$-lattices $M^{\Gamma_1}$ are isomorphic to $$\mathbb{Z}_p^a \oplus \mathbb{Z}_p[\zeta_p]^b \oplus \mathbb{Z}_p[\Gamma/\Gamma_1]^c$$ for some $a,b,c \in \{ 0,1\}$. Hence, distinguishing those $4p-3$ indecomposable lattices in the Krull-Schmidt decomposition of $\mathcal{E}'_{L_2}$ is not immediate. As a result, an upper bound obtained by a naive application of the Krull-Schmidt theorem has an approximate magnitude
\begin{equation*}
    \binom{4p-3+3p^4 \cdot \Psi}{4p-3} \geq \frac{(3p^4 \cdot \Psi)^{4p-3}}{(4p)^{4p}} = \frac{p^{12p-12} \cdot (3\Psi)^{4p-3}}{4^{4p}}.
\end{equation*}
The exponent of $p$ in Theorem \ref{2nd}(i) is independent of $p$, so the upper bound of Theorem \ref{2nd}(i) is affected less by the complexity coming from integral representations of $\Gamma$, which grows with $p$.
\end{remark}

\bibliography{KumonLim}
\bibliographystyle{abbrv}

\vskip 15pt

\noindent Asuka Kumon 
\vskip 5pt
\noindent King's College London, Department of Mathematics, London WC2R 2LS, U.K \\
\textit{Email address: asuka.kumon@kcl.ac.uk}

\vskip 15pt
\noindent Donghyeok Lim 
\vskip 5pt
\noindent Institute of Mathematical Sciences, Ewha Womans University, Seoul 03760, Republic of Korea \\
\textit{Email address: donghyeokklim@gmail.com}

\end{document}